\newtheorem{theorem}{Theorem}[section]
\newtheorem{corollary}[theorem]{Corollary}
\newtheorem{remark}[theorem]{Remark}
\newtheorem{algorithm}{Algorithm}
\definecolor{darkred}{rgb}{.7,0,0}
\definecolor{darkgreen}{rgb}{0,0.7,0}
\definecolor{darkblue}{rgb}{0,0,0.7}
\newcommand{\bu}{\bar{u}}
\newcommand{\KL}{Karhunen-Lo\'eve\,}
\newcommand{\Hp}{H^{\perp}}
\newcommand{\bbK}{\mathbb{K}}
\newcommand{\ud}{u^{\dagger}}
\newcommand{\etad}{\eta^{\dagger}}
\newcommand{\zj}{z^{(j)}}
\newcommand{\zzj}{{\widehat z}^{(j)}}
\newcommand{\ej}{\eta^{(j)}}
\newcommand{\uj}{u^{(j)}}
\newcommand{\pj}{p^{(j)}}
\newcommand{\uuj}{{\widehat u}^{(j)}}
\newcommand{\ppj}{{\widehat p}^{(j)}}
\newcommand{\ppl}{{\widehat p}^{(\ell)}}
\newcommand{\uuk}{{\widehat u}^{(k)}}
\newcommand{\ppk}{{\widehat p}^{(k)}}
\newcommand{\yj}{y^{(j)}}
\renewcommand{\dj}{d^{(j)}}
\newcommand{\pzj}{\tp^{(j)}}
\newcommand{\tp}{\widetilde p}
\newcommand{\uk}{u^{(k)}}
\newcommand{\pzk}{\tp^{(k)}}
\newcommand{\zb}{\overline{z}}
\newcommand{\pb}{\overline{p}}
\newcommand{\ub}{\overline{u}}
\newcommand{\cG}{\mathcal G}
\newcommand{\bbT}{\mathbb{T}}
\newcommand{\bbZ}{\mathbb{Z}}
\newcommand{\bbN}{\mathbb{N}}
\newcommand{\cA}{\mathcal{A}}
\newcommand{\cN}{\mathcal{N}}
\newcommand{\R} {\mathbb{R}}
\begin{document}
\title{Ensemble Kalman Methods for Inverse Problems}

\author{Marco A. Iglesias$\ast$, Kody J.H. Law$\ast$ and Andrew M. Stuart$\ast$}
\address{$\ast$ Mathematics Institute, University of Warwick, Coventry CV4 7AL, UK}
\ead{\{M.A.Iglesias-Hernandez,K.J.H.Law,A.M.Stuart\}@warwick.ac.uk}

\begin{abstract}

The Ensemble Kalman filter (EnKF)
was introduced by Evensen in 1994
\cite{evensen1994sequential} 
as a novel method for {\em data assimilation}: 
state estimation for noisily observed time-dependent problems.
Since that time it has had enormous impact in many application
domains because of its robustness and
ease of implementation, and numerical evidence of its
accuracy. In this paper we propose the application of an iterative ensemble Kalman method for the solution of a wide class of {\em inverse problems}. In this context we show that the estimate of the unknown function that we obtain with the ensemble Kalman method
lies in a subspace $\cA$ spanned by the initial ensemble.
Hence the resulting error may be bounded above by the error
found from the best approximation in this subspace. We provide
numerical experiments which compare the error incurred
by the ensemble Kalman method for inverse problems with the error of the
best approximation in $\cA$, and with variants on  
traditional least-squares approaches, 
restricted to the subspace $\cA$.
In so doing we demonstrate that the the ensemble Kalman method for inverse
problems provides a derivative-free optimization method
with comparable accuracy to that achieved by traditional 
least-squares approaches. Furthermore, we also
demonstrate that the accuracy is of the same order of magnitude
as that achieved by the best approximation. Three examples are used
to demonstrate these assertions: inversion of a compact
linear operator; inversion of piezometric head 
to determine hydraulic conductivity 
in a Darcy model of groundwater flow; and inversion of Eulerian velocity
measurements at positive times to determine the initial
condition in an incompressible fluid. 

\end{abstract}
\submitto{\IP}

\section{Introduction}\label{sec1}

Since its introduction in \cite{evensen1994sequential},
the Ensemble Kalman filter (EnKF) has had enormous impact
on applications of data assimilation to state and parameter estimation,
and in particular in oceanography \cite{evensen1996assimilation},
reservoir modelling \cite{EnKFReview} and
weather forecasting \cite{houtekamer2001sequential};
the books \cite{evensen2009data,kalnay2003atmospheric,Oliver}
give further details and references to applications
in these fields. Multiple variants of EnKF for state and parameter estimation in dynamic systems are available in the literature \cite{evensen2009data,kalnay2003atmospheric,EnKFReview}. In essence, all those techniques use an ensemble of states and parameters that is sequentially updated by means of the Kalman formula which blends the model and data available at a given time.

Motivated by ensemble Kalman-based approaches, in this paper we propose the application of an iterative ensemble Kalman method for the solution of inverse problems of finding $u$ given observations of the form
\begin{equation}
\label{eq:data}
y=\cG(u)+\eta,
\end{equation}
where $\cG: X \to Y$ is the {\em forward response
operator} mapping the unknown $u$ to the response/observation space. $X$ and $Y$ are Hilbert spaces, $\eta \in Y$ is a noise and $y \in Y$ the observed data. We assume that $\eta$ is an unknown realization of a mean zero random variable whose covariance $\Gamma$ is known to us. We are particularly interested in the case where $\cG$ is the forward response that arises from physical systems described by the solution of a PDE system. It is important to note, that in the abstract formulation of the inverse problem (\ref{eq:data}), both static and dynamic problems are considered in the same manner. For dynamic problems, the left hand side of (\ref{eq:data}) corresponds to all available observations which are, in turn, collected during a fixed time window contained in the time interval used for the underlying PDE formulation. Inverse problems of the type describe above are often ill-posed and their solution requires some sort of regularization \cite{engl1996regularization}. For the present work, regularization is introduced by incorporating prior knowledge of $u$ in the form of a finite dimensional (and hence
compact) set $\cA$ where the solution to (\ref{eq:data}) is sought (see \cite{Isakov} Chapter 2). The definition of the space  $\cA$ will be key in the formulation and properties of the ensemble method proposed for the solution of the inverse problem. 

In order to solve the inverse problem described above, artificial dynamics based on state augmentation are constructed. The state augmentation approach, typical for joint state and parameter estimation in the context of EnKF \cite{Anderson01anensemble}, can be applied in our abstract framework by constructing the space $Z=X \times Y$, and the mapping $\Xi: Z \to Z$ by
\begin{eqnarray*}
\Xi(z)=\left(
\begin{array}{c}
u\\
\cG(u)
\end{array}
\right),
\end{eqnarray*}
for $z \in Z$.  We define artificial dynamics by
\begin{eqnarray}
\label{eq:mod}
z_{n+1}&=\Xi(z_n). \label{eq:fwd}
\end{eqnarray}
Let us assume that data related to the artificial dynamics has the form
\begin{eqnarray}
y_{n+1}&=Hz_{n+1}+\eta_{n+1}. \label{eq:obs}
\end{eqnarray}
where the projection operator $H:Z\to Y$ is defined by  $H=[0,I]$ and $\{\eta_n\}_{n \in \bbZ^+}$ is an i.i.d. sequence with $\eta_1 \sim N(0,\Gamma)$ and $\Gamma$ defined above. In this paper we propose the application of the EnKF approach for state and parameter estimation for the artificial dynamical system (\ref{eq:mod}). EnKF uses an ensemble of particles that, at each iteration, is updated by combining the model (\ref{eq:mod}) with observational data via the standard Kalman update formula. In order to generate the data $\{y_n\}_{n \in \bbZ^+}$ required for the ensemble Kalman filter that we will apply to (\ref{eq:mod}), (\ref{eq:obs}) we perturb the single instance of the given observed data $y$ from (\ref{eq:data}) by independent realizations from the Gaussian random variable $N(0,\Gamma)$. We reemphasize that the 
iteration index $n$ in  (\ref{eq:mod}), (\ref{eq:obs}) is an artificial time;
in the case of a dynamic inverse problem, real time is 
contained in the abstract formulation of $\cG$ and
is not related to $n$.

 While the objective of standard EnKF approaches is to approximate, via an ensemble, statistical properties of a distribution conditioned to observations \cite{evensen2009data}, here the objective is to study a deterministic iterative scheme that aims at approximating the solution of the inverse problem (\ref{eq:data}) in the set $\cA$. We employ randomization of the single instance of the data $y$ given by (\ref{eq:data}) purely as a method to  move around the space $\cA$ in order to find improved approximations. More precisely, we construct an ensemble of interacting particles $\{z_{n}^{(j)}\}_{j=1}^{J}$ from which an estimate of the unknown is defined by
\begin{eqnarray}\label{eq:mean}
u_{n} \equiv \frac{1}{J}\sum_{j=1}^{J} u_{n}^{(j+1)}=\frac{1}{J}\sum_{j=1}^{J} \Hp z_{n}^{(j)}\,
\end{eqnarray}
where $\Hp: Z \to X$ is the projection operator defined by $\Hp=[I,0]$. We will show that, for all $n\in \mathbb{N}$, the ensemble $\{u_n^{j}\}_{j=1}^{J}$ remains in the set $\cA$ and, by
means of numerical examples we investigate the properties of $u_{n}$ at approximating, in the compact set $\cA$, the true unknown $\ud\in X$ which underlies the data. That is we assume that the data $y$ is given by
\begin{eqnarray}\label{eq:truth}
y=\cG(\ud)+\etad
\end{eqnarray}
for some noise $\etad \in Y.$ 

The purpose of the subsequent analysis is threefold:
(i) to demonstrate that the novel non-standard perspective
of the iterative ensemble Kalman method is a generic tool
for solving inverse problems; (ii) to provide 
some basic analysis of the properties of this algorithm for
inversion; (iii) to demonstrate numerically that the
method can be effective on a wide range of applications.

In \Sref{sec:PE} we introduce the iterative ensemble Kalman method for the solution to inverse problems. The space $\cA$ where a regularized solution of the inverse problem is sought is defined as the linear subspace generated by the initial ensemble members used for the iterative scheme. These, for the applications under consideration, can be generated from prior knowledge available in terms of a prior probability measure. The well-posedness of the algorithm is ensured by Theorem \ref{t:basic} where we prove that the method produces an approximation which lies in the subspace $\cA$. In other words, the approximation provided by the algorithm lies in the subspace spanned 
by the initial ensemble members, a fact observed
for a specific sequential implementation of the
EnKF in \cite{li2007iterative}. It is well known that the analysis step of the EnKF preserves the  subspace spanned by the ensemble \cite{EVENSENA}; we show that for the artificial dynamics (\ref{eq:mod}), (\ref{eq:obs}) the prediction step also preserves the subspace spanned by the ensemble leading to Theorem \ref{t:basic}. In Corollary \ref{c:basic} we use Theorem \ref{t:basic} to give a lower bound on the achievable approximation error of the ensemble Kalman algorithm. We then describe
two algorithms which we will use to evaluate
the ensemble Kalman methodology. The first is the least squares problem restricted to the subspace $\cA$. The second is the best approximation
of the truth in the subspace $\cA$. The best approximation is, of course course, not an implementable method as the truth
is not known; however it provides an important lower
bound on the achievable error of the ensemble Kalman method for synthetic experiments and
hence has a key conceptual role. At the end of \Sref{sec:PE} we discuss the links between the ensemble Kalman algorithm and the Tikhonov-Phillips regularized least squares solutions for the case of forward linear operators. 

Section \ref{sec:linear} contains numerical experiments
which illustrate the ideas in this paper on a linear
inverse problem. The forward operator is a compact operator
found from inverting the negative Laplacian plus identity with homogeneous boundary conditions. In
section \ref{sec6} we numerically 
study the groundwater flow inverse
problem of determining hydraulic conductivity
from piezometric head 
measurement in an elliptic Darcy flow
model. Section \ref{sec7} contains numerical results
concerning the problem of
determining the initial condition for the velocity in a
Navier-Stokes model of an incompressible fluid; the
observed data are pointwise (Eulerian) measurements of 
the velocity field. Conclusion and final remarks are presented in \Sref{Conclu}.

The numerical results in this paper all 
demonstrate that the iterative ensemble Kalman method for inversion is a
derivative-free regularized optimization technique
which produces numerical results similar in accuracy to
those found from least-squares based methods in 
the same subspace $\cA$. Furthermore, the three examples
serve to illustrate the point that the method offers
considerable flexibility through the choice of initial
ensemble, and hence the subspace $\cA$ in which it produces
an approximation. In particular, for the linear and
Darcy inverse problems, we make two choices of initial
ensemble: (i) draws from a prior Gaussian measure and
(ii) the \KL basis functions of the centered Gaussian
measure found by shifting
the prior by its mean. For the Navier-Stokes inverse problem the
initial ensemble is also chosen to comprise randomly 
drawn functions on the attractor of the dynamical system.

\section{An iterative ensemble Kalman method for inverse problems}
\label{sec:PE}

\subsection{Preliminaries}
\label{ssec:setup}

In the following, we use 
$\langle \cdot, \cdot \rangle$ and $\|\cdot\|$, etc. as the 
inner-product and norm on both $X$ and $Y$, and it will
be clear from the context which space is intended. Let $B^{-1}:D(B^{-1})\subset X\to X$ be a densely-defined unbounded self-adjoint operator with compact resolvent. Let us denote by $\{\lambda_{j}\}_{j=1}^{\infty}$ and $\{\phi_{j}\}_{j=1}^{\infty}$ the corresponding eigenvalues and eigenfunctions of $B^{-1}$. From standard theory it follows that
\begin{eqnarray}\label{eq:domain}
D(B^{-1})=\Big \{u\in X \vert ~\sum_{j=1}^{\infty} u_{j}^{2}\lambda_{j}^{2}<\infty\Big\}
\end{eqnarray}
and that $B^{-1}$ has the following spectral representation: $B^{-1}u=\sum_{j=1}^{\infty} \lambda_{j} u_{j}\phi_{j}$. We can additionally define the set
\begin{eqnarray}\label{eq:domain2}
D(B^{-1/2})=\Big \{u\in X \vert ~\sum_{j=1}^{\infty} u_{j}^{2}\lambda_{j}<\infty\Big\}
\end{eqnarray}
and consider the densely-defined operator $B^{-1/2}:D(B^{-1/2})\to X$ such that $B^{-1/2}u=\sum_{j=1}^{\infty} \lambda_{j}^{1/2} u_{j}\phi_{j}$. We also find it useful to define $\|\cdot\|_B \equiv \|B^{-1/2}\cdot\|$.

\subsection{The initial ensemble}
\label{ssec:alg}

The ensemble Kalman method uses an ensemble of particles $\{z_{n-1}^{(j)}\}_{j=1}^{J}$ which, at the $n$ iteration level, is updated by combining the artificial dynamics (\ref{eq:mod}) with artificial data $y_{n}$ obtained from perturbing our original data (\ref{eq:data}) in order to obtain a new ensemble $\{z_{n}^{(j)}\}_{j=1}^{J}$ from which the estimate of the unknown (\ref{eq:mean}) is computed. The scheme requires an initial (or first guess) ensemble of particles $\{z_{0}^{(j)}\}_{j=1}^{J}$ which will be iteratively updated with the EnKF method described below. The ensemble $\{z_{0}^{(j)}\}_{j=1}^{J}$ can be defined by constructing an ensemble $\{ \psi^{(j)}\}_{j=1}^J$ in $X$. Once $\{ \psi^{(j)}\}_{j=1}^J$ is specified, we can simply define
\begin{eqnarray*}
z_{0}^{(j)}=\left(
\begin{array}{c}
\psi^{(j)}\\
\cG(\psi^{(j)})
\end{array}
\right),
\end{eqnarray*}
and so our first guess of the iterative scheme $u_{0}$ (see expression (\ref{eq:mean})) is simply the mean of the initial ensemble in the space of the unknown. The construction of the initial ensemble $\{ \psi^{(j)}\}_{j=1}^J$ is in turn related to the definition of the space $\cA$ where the solution to the inverse problem is sought. Clearly, $ \psi^{(j)}$ must belong to the compact set $\cA$  which regularizes the inverse problem by incorporating prior knowledge. For the applications described in \Sref{sec:linear}, \Sref{sec6} and \Sref{sec7}, we assume that prior knowledge is available in terms of a prior probability measure that we denote by $\mu_0$. Given this prior distribution, we construct the initial ensemble $\{ \psi^{(j)}\}_{j=1}^J$ defined as $\psi^{(j)} \sim \mu_0$ i.i.d. for some $J<\infty$. Then, for consistency we define
\begin{eqnarray}\label{eq:A}
\cA={\rm span} \{ \psi^{(j)}\}_{j=1}^J
\end{eqnarray} 
comprised of the initial ensemble. Note that if $\mu_{0}$ is Gaussian $N(\bar{u},C)$, we may additionally consider $\cA$ with defined $\psi^{(j)} = \bar{u} + \sqrt{\lambda_j}\phi_j$ where $(\lambda_j,\phi_j)$ denote
eigenvalue/eigenvector pairs of $C$, in descending order by
eigenvalue -- this is the \KL basis. 

For the experiments of \Sref{sec:linear} and \Sref{sec6}, Gaussian priors are considered. For the Navier-Stokes example of \Sref{sec7}, the prior will be the empirical measure supported on the attractor. For the latter case an empirical covariance will be used to construct the KL basis. In summary, the proposed approach for solving inverse problems will be tested with an initial prior ensemble generated from  (a) $\psi^{(j)} \sim \mu_0$  i.i.d. and all algorithms using this choice of $\cA$ will bear the subscript $R$ for random; (b) $\psi^{(j)} = \bar{u} + \sqrt{\lambda_j}\phi_j$, $j \leq J$, and all algorithms using this method will bear the subscript ${\rm KL}$ for \KL. 

We emphasize that the initial ensemble $\{ \psi^{(j)}\}_{j=1}^J$ and therefore the definition of $\cA$ is a design parameter aiming at incorporating prior knowledge relevant to the application. For example, in the case not considered here, where no underlying prior probability distribution is prescribed, the initial ensemble can be defined in terms of a truncated basis $\{ \psi^{(j)}\}_{j=1}^J$ of $X$. Regardless of how the initial ensemble is chosen, the proposed approach. The proposed approach will then find a solution to the inverse problem in the subspace $\cA$ defined by (\ref{eq:A}).

\subsection{Iterative ensemble Kalman method for inverse problems}
\label{ssec:alg}

The iterative algorithm that we propose for solution of the inverse problem (\ref{eq:data}) is the following
\begin{algorithm}{Iterative ensemble method for inverse problems.}\\
 Let $\{z_{0}^{(j)}\}_{j=1}^{J}$ be the initial ensemble.\\
For $n=1,\dots$
\begin{itemize}
\item[(1)] \textbf{Prediction step.} Propagate, under the artificial dynamics (\ref{eq:fwd}), the ensemble of particles 
\begin{equation}
\label{eq:fwdj}
\zzj_{n+1}=\Xi(\zj_n).
\end{equation}
From this ensemble we define a sample mean
and covariance as follows:
\begin{eqnarray}
\zb_{n+1}&=\frac{1}{J}\sum_{j=1}^J \zzj_{n+1}\label{eq:mean}\\
C_{n+1}&=\frac{1}{J}\sum_{j=1}^J \zzj_{n+1}(\zzj_{n+1})^T-\zb_{n+1}\zb_{n+1}^T.\label{eq:cov}
\end{eqnarray}
\item[(2)]\textbf{Analysis step.}  Define the Kalman gain $K_n$ by 
\begin{eqnarray}
K_n&=C_nH^T (HC_nH^T+\Gamma)^{-1}, \label{eq:g2}
\end{eqnarray}
where $H^T$ is the adjoint operator of $H\equiv [0,I]$. Update each ensemble member as follows
\begin{eqnarray}
\label{eq:update}
\zj_{n+1}&=&I\zzj_{n+1}+K_{n+1}(\yj_{n+1}-H\zzj_{n+1})\\
&=&(I-K_{n+1}H)\zzj_{n+1}+K_{n+1}\yj_{n+1}.
\end{eqnarray}
where
\begin{equation}
\label{eq:Y1}
\yj_{n+1}=y+\ej_{n+1}.
\end{equation}
and the $\ej_{n+1}$ are an i.i.d collections of
vectors indexed by $(j,n)$ with $\eta_1^{(1)} \sim N(0,\Gamma).$
\end{itemize}
\item[(3)] Compute the mean of the parameter update
\begin{eqnarray}\label{eq:update}
u_{n+1} \equiv \frac{1}{J}\sum_{j=1}^{J} u_{n+1}^{(j+1)}
\end{eqnarray}
and check for convergence (see discussion below).
\end{algorithm}

Each iteration of the ensemble Kalman algorithm breaks into two parts, a {\em prediction step}
and an {\em analysis step}. The prediction step maps the current ensemble of
particles into the data space, and thus introduces
information about the forward model. The analysis step
makes comparisons of the mapped ensemble, in the data
space, with the data, or with versions of the data perturbed with noise; it is at this stage that the
ensemble is modified in an attempt to better match the
data. As noted earlier, we generate artificial data (\ref{eq:obs}) consistent with the artificial dynamics (\ref{eq:mod}) by perturbing the observed data (\ref{eq:data}). More precisely, data is perturbed according to  (\ref{eq:Y1}) with noise consistent with the distribution assumed
on the noise $\eta$ in the inverse problem (\ref{eq:data}). Perturbing the noise in the standard EnKF methods is typically used to properly capture statistical properties. However, in the present application, we are merely interested in a deterministic estimation of the inverse problem. Nonetheless, numerical results  (not shown) without perturbing the noise (e.g. with $\ej_{n+1}=0$ in (\ref{eq:Y1})) gave rise to less accurate solutions than the ones obtained when the data was perturbed according to (\ref{eq:Y1}). The added noise presumably helps
the algorithm explore the approximation space and hence
find a better approximation within it.

The proper termination of iterative regularization techniques
\cite{Iterative} is essential for the regularization of ill-posed
inverse problems. The discrepancy principle, for example, provides a
stopping criterion that will ensure the convergence and regularizing
properties of iterative techniques such as the Landweber iteration,
Levenberg-Marquardt and the iteratively regularized Gauss-Newton
\cite{Iterative}. A complete analysis of the convergence and
regularizing properties of the iterative ensemble Kalman method is
beyond the scope of this paper. Nonetheless, numerical experiments
suggest that the discrepancy principle can be a useful stopping
criterion for the ensemble Kalman algorithm; in particular these
experiments indicate that the criterion ensures the stable computation
of an approximation to the inverse problem
(\ref{eq:data}). Concretely, according to the discrepancy principle,
the ensemble Kalman method is terminated for the first $n$ such that
$\vert\vert y-\cG(u_{n})\vert\vert_{\Gamma}\leq \tau \vert\vert
\eta^{\dagger}\vert\vert_{\Gamma}$ for some $\tau>1$ and where
$\eta^{\dagger}$ is the realization of noise associated to the data (\ref{eq:truth}). 

We show in the next section that the entire ensemble at each
step of the algorithm lies in the set $\cA$ defined
by (\ref{eq:A}) and that, hence, the parameter estimate (\ref{eq:update}) lies
in the set $\cA$.

\subsection{Properties of the iterative ensemble method }

We wish to show that each estimate $u_{n}$ (\ref{eq:update}) of the ensemble Kalman method is a linear of combinations of the initial ensemble
with nonlinear weights reflecting the observed data, which from (\ref{eq:A}) implies $u_{n}\in\cA$. First, note that all the vectors and operators involved have block
structure inherited from the structure of the space $Z=X \times Y.$
For example we have
\begin{eqnarray*}
\zzj_{n+1}
=\left(
\begin{array}{c}
\uuj_{n+1}\\
\ppj_{n+1}
\end{array}
\right)
=\left(
\begin{array}{c}
\uj_{n}\\
\cG\bigl(\uj_n\bigr)
\end{array}
\right), \quad
\zj_{n}=\left(
\begin{array}{c}
\uj_{n}\\
\pj_{n}
\end{array}
\right).
\end{eqnarray*}
We also have
\begin{eqnarray*}
\zb_n=\left(
\begin{array}{c}
\ub_n\\
\pb_n
\end{array}
\right), \quad
C_n=\left(
\begin{array}{cc}
C^{uu}_n & C^{up}_n\\
(C^{up}_n)^T & C^{pp}_n
\end{array}
\right).
\end{eqnarray*}
The vectors $\ub_n$ and $\pb_n$ are given by
\begin{eqnarray}
\ub_n&=\frac{1}{J}\sum_{j=1}^J \uuj_n=\frac{1}{J}\sum_{j=1}^J \uj_{n-1}\label{eq:meanu}\\
\pb_n&=\frac{1}{J}\sum_{j=1}^J \ppj_n=\frac{1}{J}\sum_{j=1}^J \cG\bigl(\uj_{n-1}\bigr)\label{eq:meanp}
\end{eqnarray}

The blocks within $C_n$ are given by
\begin{eqnarray}
C^{uu}_n&=\frac{1}{J}\sum_{j=1}^J \uuj_n(\uuj_n)^T-\ub_n\ub_n^T,\label{eq:cuu}\\
C^{up}_n&=\frac{1}{J}\sum_{j=1}^J \uuj(\ppj)^T-\ub_n\pb_n^T,\label{eq:cup}\\
C^{pp}_n&=\frac{1}{J}\sum_{j=1}^J \ppj(\ppj)^T-\pb_n\pb_n^T,\label{eq:cpp}.
\end{eqnarray}

As we indicated before, it is well-known that the analysis step of the
Ensemble Kalman filter provides an updated ensemble
which is in the linear span of the forecast ensemble \cite{EVENSENA}.
In the context of our iterative method for inverse
problems the forecast ensemble itself is in the linear
span of the preceding analysis ensemble, when projected
into the parameter coordinate. Combining these two
observations shows that the ensemble parameter estimate
lies in the linear span of the initial ensemble.

\begin{theorem}
\label{t:basic}
For every $(n,j) \in \bbN \times \{1,\cdots,J\}$
we have $u_{n+1}^{(j)} \in \cA$ and hence $u_{n+1} \in \cA$ for all $n\in \bbN$. 
\end{theorem}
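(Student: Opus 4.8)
The plan is to argue by induction on $n$, using the block structure $Z=X\times Y$ to follow only the parameter coordinate $\Hp z=u$. In fact I would induct on the slightly stronger statement that every member of the analysis ensemble lies in $\cA$, namely $\uj_n\in\cA$ for all $j$; since $\cA$ is a linear subspace this immediately yields $u_n\in\cA$ for the mean. The base case $n=0$ is built into the construction of the initial ensemble: there $\uj_0=\psij$, so $\uj_0\in\cA$ by the definition $\cA=\mathrm{span}\{\psij\}_{j=1}^J$ in (\ref{eq:A}).

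For the inductive step I would assume $\uk_n\in\cA$ for all $k$ and track what each half of the iteration does to the parameter coordinate. The prediction step (\ref{eq:fwdj}) simply copies it, $\uuj_{n+1}=\uj_n$. For the analysis step I would compute the block form of the gain: because $H=[0,I]$, right-multiplication of the block covariance $\Cn$ by $H^T$ extracts its second block column while $H\Cn H^T=C^{pp}_{n+1}$, so that $\Hp K_{n+1}=C^{up}_{n+1}(C^{pp}_{n+1}+\Gamma)^{-1}$. Applying $\Hp$ to the analysis update and using $\Hp\zzj_{n+1}=\uj_n$ together with $H\zzj_{n+1}=\ppj_{n+1}=\cG(\uj_n)$ gives
\[
\uj_{n+1}=\uj_n+C^{up}_{n+1}\bigl(C^{pp}_{n+1}+\Gamma\bigr)^{-1}\bigl(\yj_{n+1}-\cG(\uj_n)\bigr).
\]

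The crux is then to identify the range of $C^{up}_{n+1}$. Rewriting (\ref{eq:cup}) at level $n+1$ in centered outer-product form and substituting $\uuk_{n+1}=\uk_n$ yields
\[
C^{up}_{n+1}=\frac{1}{J}\sum_{k=1}^J(\uk_n-\ub_{n+1})(\ppk_{n+1}-\pb_{n+1})^T ,
\]
so that $C^{up}_{n+1}w$ is, for any $w\in Y$, a linear combination of the vectors $\{\uk_n-\ub_{n+1}\}_{k=1}^J$, each of which lies in $\mathrm{span}\{\uk_n\}_{k=1}^J$. Hence the correction term above lies in $\mathrm{span}\{\uk_n\}_{k=1}^J\subseteq\cA$ by the inductive hypothesis, and adding $\uj_n\in\cA$ gives $\uj_{n+1}\in\cA$. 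This closes the induction, and $u_{n+1}\in\cA$ follows since $\cA$ is closed under linear combinations.

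I do not expect a genuine obstacle: once the block structure is unpacked the argument is a direct computation, and the only point needing a word of care is that $(C^{pp}_{n+1}+\Gamma)^{-1}$ is well defined, which holds because $\Gamma$ is a strictly positive covariance and $C^{pp}_{n+1}$ is positive semidefinite. The conceptual heart is simply that $C^{up}_{n+1}$ factors through the parameter ensemble, so whatever nonlinear weights the data contribute through $(C^{pp}_{n+1}+\Gamma)^{-1}(\yj_{n+1}-\cG(\uj_n))$, the resulting correction can never leave the span of the current particles.
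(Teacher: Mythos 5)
Your proof is correct and follows essentially the same route as the paper's: induction on $n$, unpacking the block structure of the Kalman gain so that $\uj_{n+1}=\uj_n+C^{up}_{n+1}(C^{pp}_{n+1}+\Gamma)^{-1}\bigl(\yj_{n+1}-\cG(\uj_n)\bigr)$, and observing that $C^{up}_{n+1}$ factors through the parameter ensemble, so the correction stays in $\mathrm{span}\{\uk_n\}_{k=1}^J\subseteq\cA$. The only cosmetic difference is that you use the fully centered outer-product form of $C^{up}_{n+1}$ where the paper centers only the $p$-component and writes the correction explicitly as $\frac{1}{J}\sum_{k=1}^J\langle \pzk_{n+1},\dj_{n+1}\rangle\,\uk_n$; these are equivalent, and your added remark on the invertibility of $C^{pp}_{n+1}+\Gamma$ is a welcome point the paper leaves implicit.
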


\begin{proof} This is a straightforward induction, using
the properties of the update formulae. Clearly the statement
is true for $n=0$. Assume that it is true for $n$. 
The operator $K_n$ have particular structure 
inherited from the form of $H$. In concrete, we have
\begin{eqnarray*}
K_n=\left(
\begin{array}{c}
C^{up}_n(C^{pp}_n+\Gamma)^{-1}\\
C^{pp}_n(C^{pp}_n+\Gamma)^{-1}
\end{array}
\right).
\end{eqnarray*}
Note that

\begin{equation}\label{eq:S2.1}
C^{up}_n=\frac{1}{J}\sum_{j=1}^J \uuj_n(\pzj_n)^T, \qquad 
C^{pp}_n=\frac{1}{J}\sum_{j=1}^J \ppj_n(\pzj_n)^T
\end{equation}
where
\begin{equation}\label{eq:S2.2}
\pzj_n=\ppj_n-\frac{1}{J}\sum_{\ell=1}^J \ppl_n.
\end{equation}

Recall that, from the structure of the map $\Xi$, we
have
\begin{equation}\label{eq:S2.3}
\uuj_{n+1}=\uj_n, \quad \ppj_{n+1}=\cG(\uj_n)
\end{equation}

and from the definition of $H$ we get that
\begin{eqnarray*}
K_{n}H=\left(
\begin{array}{cc}
0 & C^{up}_n (C^{pp}_n+\Gamma)^{-1}\\
0 & C^{pp}_n (C^{pp}_n+\Gamma)^{-1}
\end{array}
\right)
\end{eqnarray*}
Using these facts in (\ref{eq:update}) we deduce that
the update equations are
\begin{eqnarray}
\uj_{n+1}&=\uj_{n}+C^{up}_{n+1}(C^{pp}_{n+1}+\Gamma)^{-1}\Bigl(\yj_{n+1}-
\cG\bigl(\uj_n\bigr)\Bigr)\label{eq:uu} \\
\pj_{n+1}&=\cG(\uj_{n})+C^{pp}_{n+1}(C^{pp}_{n+1}+\Gamma)^{-1}
\Bigl(\yj_{n+1}-\cG\bigl(\uj_n\big)\Bigr)\label{eq:pp}.
\end{eqnarray}

If we define
$$\dj_{n+1}=(C^{pp}_{n+1}+\Gamma)^{-1}\Bigl(\yj_{n+1}-
\cG\bigl(\uj_n\bigr)\Bigr)$$ 
then the update formula (\ref{eq:uu}) 
for the unknown $u$ may be written as
\begin{eqnarray}
\uj_{n+1} 
&=\uj_{n}+\frac{1}{J}\sum_{k=1}^J \langle 
\pzk_{n+1}, \dj_{n+1} \rangle \uuk_{n+1}\\
&=\uj_{n}+\frac{1}{J}\sum_{k=1}^J \langle 
\pzk_{n+1}, \dj_{n+1} \rangle \uk_n.
\label{eq:useful1}
\end{eqnarray}
In view of the inductive hypothesis at step $n$, this demonstrates
that $\uj_{n+1} \in \cA$ for all $j\in \{1,\cdots, J\}$. Then, from (\ref{eq:update}) and (\ref{eq:A}) it follows that $u_{n+1}\in \cA$ which finalizes the proof.
\end{proof}

\begin{remark}
\label{rem:basic}
The proof demonstrates that the solution at step $n$ is simply
a linear combination of the original samples; however
the coefficients in the linear combination depend nonlinearly
on the process.
Note also that the update formula (\ref{eq:pp}) for the system
response may be written as
\begin{eqnarray}
\pj_{n+1} 
&=\cG\bigl(\uj_n\bigr)+\frac{1}{J}\sum_{k=1}^J \langle 
\pzk_{n+1}, \dj_{n+1} \rangle \ppk_{n+1}\\
&=
\cG\bigl(\uj_n\bigr)
+\frac{1}{J}\sum_{k=1}^J \langle 
\pzk_{n+1}, \dj_{n+1} \rangle \cG\bigl(\uk_n).
\label{eq:useful2}
\end{eqnarray}
\end{remark}

We have the following lower bound for the accuracy
of the estimates produced by the ensemble Kalman algorithm:

\begin{corollary}
\label{c:basic}
The error between the estimate of $u$ at time $n$ and the truth $\ud$
satisfies
$$\|u_n-\ud\| \ge \inf_{v \in \cA}\|v-\ud\|.$$
\end{corollary}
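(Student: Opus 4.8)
The plan is to obtain the bound as an immediate consequence of the subspace-containment result already in hand. The entire mathematical content sits in Theorem \ref{t:basic}; the remaining step is the elementary observation that the distance from the fixed target $\ud$ to any single point of $\cA$ cannot be smaller than the distance from $\ud$ to $\cA$ as a whole.

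First I would invoke Theorem \ref{t:basic} to record that, for every $n \in \bbN$, the estimate $u_n$ produced by the iterative ensemble Kalman method lies in the set $\cA$. This is the only nontrivial input, and it has already been proved by the induction in the preceding theorem. Second, I would note that, being a particular element of $\cA$, the vector $u_n$ is admissible in the minimization that defines $\inf_{v \in \cA}\|v - \ud\|$. By the definition of the infimum as a greatest lower bound, any admissible point gives a value at least as large as the infimum; taking that point to be $u_n$ yields $\|u_n - \ud\| \ge \inf_{v \in \cA}\|v - \ud\|$, which is exactly the claimed inequality.

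There is essentially no obstacle to surmount: once $u_n \in \cA$ is granted, the conclusion is the one-line definitional property of the infimum. The only point one might pause over is whether the right-hand side is meaningful, and here it is reassuring that $\cA = \mathrm{span}\{\psi^{(j)}\}_{j=1}^J$ is a finite-dimensional, hence closed, subspace of the Hilbert space $X$, so the best approximation of $\ud$ in $\cA$ in fact exists and the infimum is attained. I would emphasize, however, that attainment is not needed for the stated inequality, since the infimum serves as a lower bound irrespective of whether it is achieved. This also clarifies the interpretation promised in the text: the best approximation in $\cA$ is a genuine, if non-implementable, benchmark against which the error $\|u_n - \ud\|$ can be measured.
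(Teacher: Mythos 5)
Your proof is correct and matches the paper's (implicit) reasoning exactly: the paper states Corollary \ref{c:basic} without further argument precisely because, once Theorem \ref{t:basic} gives $u_n \in \cA$, the inequality is the definitional property of the infimum, as you observe. Your remark that attainment of the infimum (via finite-dimensionality of $\cA$) is reassuring but not needed is accurate and consistent with the paper's use of the best approximation as a benchmark.
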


\subsection{Evaluating the performance of the iterative Kalman method for inverse problems}

With the iterative EnKF-based algorithm previously described we aim at finding solutions $u_{EnKF}$ of the inverse problem (\ref{eq:data}) in the subspace $\cA$. We now wish to evaluate the performance of $u_{EnKF}$ at recovering the truth $u^{\dagger}$ defined in (\ref{eq:truth}). Central to this task is the misfit functional
\begin{equation}
\label{eq:misfit}
\Phi(u)= \|y-\cG(u)\|^2_\Gamma,
\end{equation}
which should be minimized in some sense. As we indicated earlier, the inversion of $\cG$ is ill-posed and so minimization of $\Phi$ 
over the whole space $X$ is not possible. As for the Ensemble Kalman method, we choose to regularize the least squares methods by incorporating prior knowledge via minimization of $\Phi$ over the compact set $\cA\in X $. Then, we compute the {\it least squares} solution
\begin{equation}\label{eq:ls}
u_{LS}= {\rm argmin}_{u \in \cA} ||y-\cG(u)||^2_{\Gamma}
\label{eq:pedls}
\end{equation}
or generalizations to include truncated Newton-CG iterative methods \cite{hanke1997regularizing} as well as Tikhonov-Phillips regularization
\cite{vogel2002computational},
based on $\mu_0=N(\bu,C)$. In other words, we consider $u_{TP}= {\rm argmin}_{u \in \cA} ||y-\cG(u)||^2_{\Gamma}+\|u-\bu\|_C^2.$

We recall our definition to the true solution $u^{\dagger}$ (see equation \ref{eq:truth})) to the inverse problem (\ref{eq:data}). Given that both the ensemble Kalman method and the least-squares solution (\ref{eq:ls}) provide, in the subspace $\cA$, some approximation to the truth, it is then natural to consider the {\em best approximation}
to the truth in $\cA$. In other words, 
\begin{equation}
u_{BA} = {\rm argmin}_{u \in \cA} ||u-u^\dagger||^2_{\Gamma}.
\label{eq:pedba}
\end{equation}
Best approximation properties of ensemble methods is discussed in a
different context in 
\cite{Perianez}.

In the experiments below we compare the accuracy in the approximations obtained with the ensemble Kalman method with respect to the least square solution and the best approximation. The latter, however, is done only for the sake of assessing the performance the technique with synthetic experiments for which the truth is known.

In typical applications of EnKF-based methods the amount of observational data is usually much smaller than the dimensions of the space $X$ for the unknown. Therefore, the matrix computations for the construction of the Kalman gain matrix  (\ref{eq:g2}) are often negligible compared to the computational cost of evaluating, for each ensemble member, the forward model (\ref{eq:mod}) at each iteration of the scheme. In this case, the computational cost of the ensemble Kalman algorithm is dominated by the size of the ensemble multiplied by the total number of iterations. While the computational cost of EnKF is quite standard, the cost of solving the least-squares problem depends on the particular implementation used for the application under consideration. Since the optimality of the implementation of least-squares problems is beyond the scope of our work, we do not assess the computational efficiency of the ensemble method with respect to the optimization methods used for the solution of the least-squares problem.

\subsection{Connection between regularized least-squares and the iterative ensemble Kalman method for linear inverse problems}

\label{ssec:lp}

It is instructive to consider the case where $\cG(u)=Gu$
for some linear operator $G:X \to Y$ as this will enable us to
make links between the iterative ensemble Kalman method and the standard regularized least squares problems. Let $C^{-1}$ be an operator like the one defined in \Sref{ssec:setup} with $D(C^{-1/2})$ defined analogously to (\ref{eq:domain2}). Consider the Tikhonov-Phillips regularized functional 
\begin{equation}
\label{eq:misfit3}
I(u)= \|y-Gu\|^2_\Gamma+\|u-\bu\|_C^2.
\end{equation}
If $\|y-G\cdot\|^2_\Gamma$ is continuous on $D(C^{-1/2})$ then $I(u)$ is weakly lower semicontinuous in $X$ and hence the unique minimizer $u_{\rm TP}={\rm argmin}_{u \in D(C^{-1/2})} I(u)$ is attained in $D(C^{-1/2})$; indeed minimizing sequences converge strongly in $D(C^{-1/2})$ along a subsequence. The existence of a minimizer
follows from the standard theory of calculus of variations,
see Theorem 1, Part 2, in \cite{Dac}. Uniqueness follows
from the quadratic nature of $I$. The fact that minimizing
sequences  converge strongly uses an argument from 
Theorem II.2.1 in \cite{Sta}, as detailed in Theorem 2.7
of \cite{cotter2009bayesian}. We note that
minimization of $I$ given by (\ref{eq:misfit3})
has a statistical interpretation as the maximum a posteriori (MAP) estimator
for Bayesian solution to the inverse problem with
Gaussian prior $N(\bu,C)$ on $u$ \cite{kaipio2005statistical}. Furthermore the minimizer of (\ref{eq:misfit3}) can be computed explicitly under a variety
of different assumptions on the linear operators $C$, $G$ and
$\Gamma$. In all cases the formal expression for the
solution can then be written \cite{lehtinen1999linear,mandelbaum1984linear}
\begin{equation}\label{eq:TP}
u_{\rm TP} = \bu + C G^* ( G C G^* + \Gamma)^{-1} (y - G \bu).
\label{kalman}
\end{equation}
The purpose of this subsection is now to show the connection between the iterative Kalman method that we introduce in this paper and the regularized least-squares solution (\ref{eq:TP}) for the solution of linear inverse problems that arise from linear forward operators. Recall the set ${\cal A}$ defined by (\ref{eq:A}). We consider the case where $\psi^{(j)}\sim \mu_0$. Let $\mu_{0}= N(\bu,C)$ and consider $n=0$ in the algorithm of the previous section. Let us define the prior ensemble mean and covariance 
\begin{eqnarray}
m_{J}\equiv \frac{1}{J}\sum_{j=1}^J u_{0}^{(j)}, \qquad C_{J}\equiv \frac{1}{J-1}\sum_{j=1}^J (\uj_1-m_{J})(\uj_1- m_{J})^T
\end{eqnarray}
From (\ref{eq:S2.1})-(\ref{eq:S2.3}) for $n=0$ and $\cG(u)=Gu$ we have
\begin{eqnarray}
\fl C^{up}_1=\frac{1}{J}\sum_{j=1}^J \uuj_0(\pzj_0)^T=\frac{1}{J}\sum_{j=1}^J  \uj_0(G \uj_0-\frac{1}{J}\sum_{\ell=1}^J G u_0^{(l)})^T=\frac{1}{J}\sum_{j=1}^J \uj_0(G\uj_0-G m_{J})^T\nonumber\\
\fl =\frac{1}{J}\sum_{j=1}^J (\uj_0-m_{J})(G\uj_0-G m_{J})^T=\frac{1}{J}\sum_{j=1}^J (\uj_0-m_{J})(\uj_0- m_{J})^TG^{*}\nonumber\\=\Bigg[\frac{J-1}{J}\Bigg]C_{J} G^{*}
\end{eqnarray}
Similarly,
\begin{eqnarray}
\fl C^{pp}_1=\frac{1}{J}\sum_{j=1}^J \ppj_0(\pzj_0)^T=\frac{1}{J}\sum_{j=1}^J G \uj_0 (G \uj_0-\frac{1}{J}\sum_{\ell=1}^J G u_{0}^{(l)})^T=\frac{1}{J}\sum_{j=1}^J G \uj_0 (G \uj_0-G m_{J})^T\nonumber\\
\fl =\frac{1}{J}\sum_{j=1}^JG (  \uj_0 -m_{J} )( \uj_0-m_{J})^TG^{*}
=\Bigg[\frac{J-1}{J}\Bigg]G C_{J}G^{*}
\end{eqnarray}
Therefore, from (\ref{eq:uu}), (\ref{eq:update}) and (\ref{eq:Y1}) it follows that the $u_{1}$ estimate of the iterative method is 
\begin{eqnarray}
\fl u_{1}\equiv \frac{1}{J}\sum_{j=1}^{J}\uj_{1}=  \frac{1}{J}\sum_{j=1}^{J}\uj_{0}+C^{up}_{1}(C^{pp}_{1}+\Gamma)^{-1}\Bigl( \frac{1}{J}\sum_{j=0}^{J}(\yj_{1}-
G \uj_0)\Bigr)\nonumber\\
= m_{J}+C^{up}_{1}(C^{pp}_{1}+\Gamma)^{-1}\Bigl(y+\frac{1}{J}\sum_{j=0}^{J}\eta_{1}^{(j)}-G m_{J}
\Bigr)\label{eq:uu2} 
\end{eqnarray}
Thus, the estimate $u_{1}$ provides an approximation to the regularized least-squares problem (\ref{eq:TP}) that converges in the limit of $J\to \infty$ (i.e. with infinite number of ensemble members). Indeed, notice that, almost surely as $J \to \infty$
\begin{eqnarray}
\fl \overline{u}_{J} \to  \overline{u}, \qquad C_{J}\to C,\qquad C^{up}_1 \to  CG^{*},\qquad  C^{pp}_1\to G C G^{*},\qquad \frac{1}{J}\sum_{j=0}^{J}\eta_{1}^{(j)}\to 0.
\end{eqnarray}
Therefore, as $J\to \infty$
\begin{eqnarray}
u_{1} \to u_{\rm TP} = \bu + C G^* ( G C G^* + \Gamma)^{-1} (y - G \bu).
\end{eqnarray}
This link between the ensemble Kalman method and the regularized least square-problems for the linear inverse problem opens the possibility of solving nonlinear inverse problems by iterating the Kalman filter as we proposed in the subsection \ref{ssec:alg}. However, it is important to remark that, in contrast to the least-squares approach, the implementation of the ensemble Kalman method does not require the derivative of the forward operator. 

\section{Elliptic Equation}
\label{sec:linear}

As a simple pedagogical example, we consider the ill-posed inverse
problem of recovering the right-hand side of an elliptic equation in 
one spatial dimension
given noisy observation of the solution.  This explicitly
solvable linear model will allow us to elucidate 
the performance of the ensemble Kalman method as an iterative regularization method.  
The results show that ensemble Kalman method performs comparably to 
Tikhonov-Phillips regularized least squares for this problem.
The lower bound BA is always significantly
better, but of course unimplementable in practice,
since the truth is unknown.

\subsection{Setting}

Consider the one dimensional elliptic equation
\begin{eqnarray*}
-\frac{d^2 p}{dx^2}+p &=& u\\
u(0)=u(\pi)&=&0.
\end{eqnarray*}
Thus $G=A^{-1}$ where $A=(-\frac{d^2}{dx^2}+1)$ and
$D(A)=H^2(I)\cap H^1_0(I)$ with $I=(0,\pi).$
We are interested in the inverse problem of recovering 
$u$ from noisy observations of $p$:
\begin{eqnarray*}
y&=&p+\eta\\
&=&A^{-1}u+\eta.
\end{eqnarray*}
For simplicity we assume that the noise is white:
$\eta \sim N(0,\gamma^2 I).$
We consider Tikhonov-Phillips regularization of the form
$\|u\|^2_C$, where $C=\beta(A-I)^{-1}$.  
The problem may be solved explicitly in the Fourier
sine basis, and the coefficients of the Tikhonov-Phillips
regularized least squares solution, in this basis, $u_k$ may
be expressed in terms of the coefficients of the data  in the
same basis, $y_k$: 
\begin{equation}
\left [ \left ( \frac{1}{\gamma (1+k^2)}  \right )^2 + \beta^{-1}k^2 \right ] u_k =
\frac{y_k}{\gamma^2 (1+k^2)}, \quad
k=1,...,\infty
\label{pedreg}
\end{equation}
This demonstrates explicitly the regularization
which is present for wavenumbers $k$ such that
$k^6 \ge {\cal O}(\beta\gamma^{-2}).$
We now present some detailed numerical experiments
which will place the ensemble Kalman algorithm in the context of an
iterative regularization scheme.  This will provide
a roadmap for understanding the nonlinear inverse problem
applications we explore in subsequent sections.

\subsection{Numerical Results}

Throughout this subsection we choose $\beta=10$ and
$\gamma=0.01.$ We choose a truth $u^\dagger \sim N(0,C)$
and simulate data from the model: 
$y = A^{-1}u^\dagger + \eta^\dagger$, 
where $\eta^\dagger \sim \cN(0,\Gamma)$. 
Recall from Section \ref{sec:PE} that the space 
$\cA={\rm span} \{ \psi^{(j)}\}_{j=1}^J$
will be chosen either based on draws from $N(0,C)$
(with subscript $R$) or from the \KL basis for $N(0,C)$
(with subscript ${\rm KL}$). The experiments obtained from the
iterative ensemble Kalman method with the set $\cA$ chosen according
to the two scenarios described above are denoted by EnKF$_{\rm R}$ and
EnKF$_{\rm KL}$, respectively.  In Figure \ref{ped_ensembles} (left)
we display the relative error with respect to the truth of EnKF$_{\rm
  R}$ and EnKF$_{\rm KL}$ (i.e. $\vert\vert
u_{EnKF}-u^{\dagger}\vert\vert/\vert\vert
u^{\dagger}\vert\vert$). 
The data misfit $\vert\vert
y-G(u_{EnKF})\vert\vert$ is shown in Figure
\ref{ped_ensembles} (right). The black dotted line corresponds to the
value of the noise level, i.e. 
$\vert\vert y-G(u^{\dagger})\vert\vert=\vert\vert
\eta^\dagger\vert\vert$. 
Note that the error of EnKF$_{\rm KL}$ decreases for some iterations before reaching its minimum, while EnKF$_{\rm R}$ reaches its minimum after a small number of iterations (often only one), and then increases. In both cases the error reaches its minimum at an iteration step such that the value of the associated data misfit is approximately the noise level defined above. In particular, for EnKF$_{\rm R}$ we observe that once $\vert\vert y^{\dagger}-G(u_{EnKF})\vert\vert$ is below the noise, the error in the estimate increases. This behavior has been often reported when some optimization techniques are applied for the solution of inverse ill-posed problems  \cite{Nagy}. It is clear that, once the data misfit is at, or
below, the noise level, the given choice of $\cA$ does not provide sufficient regularization of the problem. Indeed this suggests that an early termination based 
on the discrepancy principle \cite{hanke1997regularizing} may furnish the ensemble Kalman algorithm with the regularizing properties needed for this choice of $\cA$. It is worth mentioning that the aforementioned increase in the error after the data misfit reaches the noise level was observed in additional experiments (not shown) where the data was generated with different noise levels. 

It is clear form Figure \ref{ped_ensembles} that the selection of $\cA$ with elements from the first elements of the KL basis alleviates the ill-posedness of the inverse problem. More precisely, those first elements of the KL basis corresponds to the largest eigenvalues of the covariance operator $C$. Therefore, the subspace $\cA$ where the solution of the ill-posed problem is sought does not contain the directions associated with smaller eigenvalues which are, in turn, responsible for the lack of stability of the inversion. Then, in contrast to the EnKF$_{R}$, the estimate generated EnKF$_{KL}$ is a linear combination of eigenfunctions associated with larger eigenvalues of $C$.

For the two scenarios described earlier, we now compare the performance of the ensemble Kalman method with respect to the Tikhonov-Phillips regularized least-squares (LS) and the best approximation BA methods.  Let us consider first the ``$R$'' random scenario where $\cA$ is the linear subspace generated from random draws from $\mu_{0}$. Since the estimate depends on the choice of $\cA$, we compare EnKF$_{\rm R}$, LS$_{\rm R}$ and BA$_{\rm R}$ on 100 different $\cA$'s corresponding to 100 different prior ensembles. Furthermore, since we noted that EnKF$_{\rm R}$ increases after the first iterations, in this example we consider only the first iteration of EnKF$_{\rm R}$. The relative errors of the three estimator for different $\cA's$ are displayed in Figure \ref{ped_ensembles2}. The methods clearly indicates that the ensemble Kalman method is comparable to the least-squares method in terms of accuracy,
but does not involve derivatives of the forward operator. Both the ensemble Kalman method and the least-squares are
less accurate than the best approximation, of course, but produce errors
of similar order of magnitude. In the first column of Table \ref{Table} we display the values of the aforementioned estimators averaged over the sets $\cA$ (generated from the prior). For the case where $\cA$ is generated from the KL basis, the comparison of the EnKF$_{KL}$, LS$_{KL}$ and $BA_{KL}$ is straightforward and the values are also displayed in the first column of Table \ref{Table}. Note that for EnKF$_{KL}$ we consider the estimate obtained from at the last iteration. These results again show that EnKF and LS are similar in terms
of accuracy, and produce errors of similar order of magnitude to
BA.

\begin{table}
\caption{Relative errors with respect to the truth for the experiments in \Sref{sec:linear} (elliptic), \Sref{sec6} (groundwater) and \Sref{sec7} (NSE)}
\begin{center} \footnotesize
\item[] \lineup
\begin{tabular}{|c|c|c|c|} \hline
Method &Elliptic& Groundwater &NSE\\ \hline
\lower.3ex\hbox{$EnKF_{R}$ (averaged over $\cA$)} & \lower.3ex\hbox{0.257} &\lower.3ex\hbox{0.597}& \lower.3ex\hbox{0.661}\\
\lower.3ex\hbox{$LS_{R}$  (averaged over $\cA$)} &\lower.3ex\hbox{0.264} &\lower.3ex\hbox{0.581} &\lower.3ex\hbox{0.591}\\
\lower.3ex\hbox{$BA_{R}$  (averaged over $\cA$)} &\lower.3ex\hbox{0.111} &\lower.3ex\hbox{0.367} &\lower.3ex\hbox{0.499}\\
\lower.3ex\hbox{$EnKF_{KL}$ (final iteration $n=30$)} & \lower.3ex\hbox{0.270} &\lower.3ex\hbox{0.591}& \lower.3ex\hbox{0.650}\\
\lower.3ex\hbox{$LS_{KL}$ } &\lower.3ex\hbox{0.250}&\lower.3ex\hbox{0.569} &\lower.3ex\hbox{0.500}\\
\lower.3ex\hbox{$BA_{KL}$ } &\lower.3ex\hbox{0.070}&\lower.3ex\hbox{0.278} &\lower.3ex\hbox{0.439}\\
\hline
\end{tabular}
\end{center}\label{Table}
\end{table}

\begin{figure}[h]
\begin{center}
     \includegraphics[width=.47\textwidth]{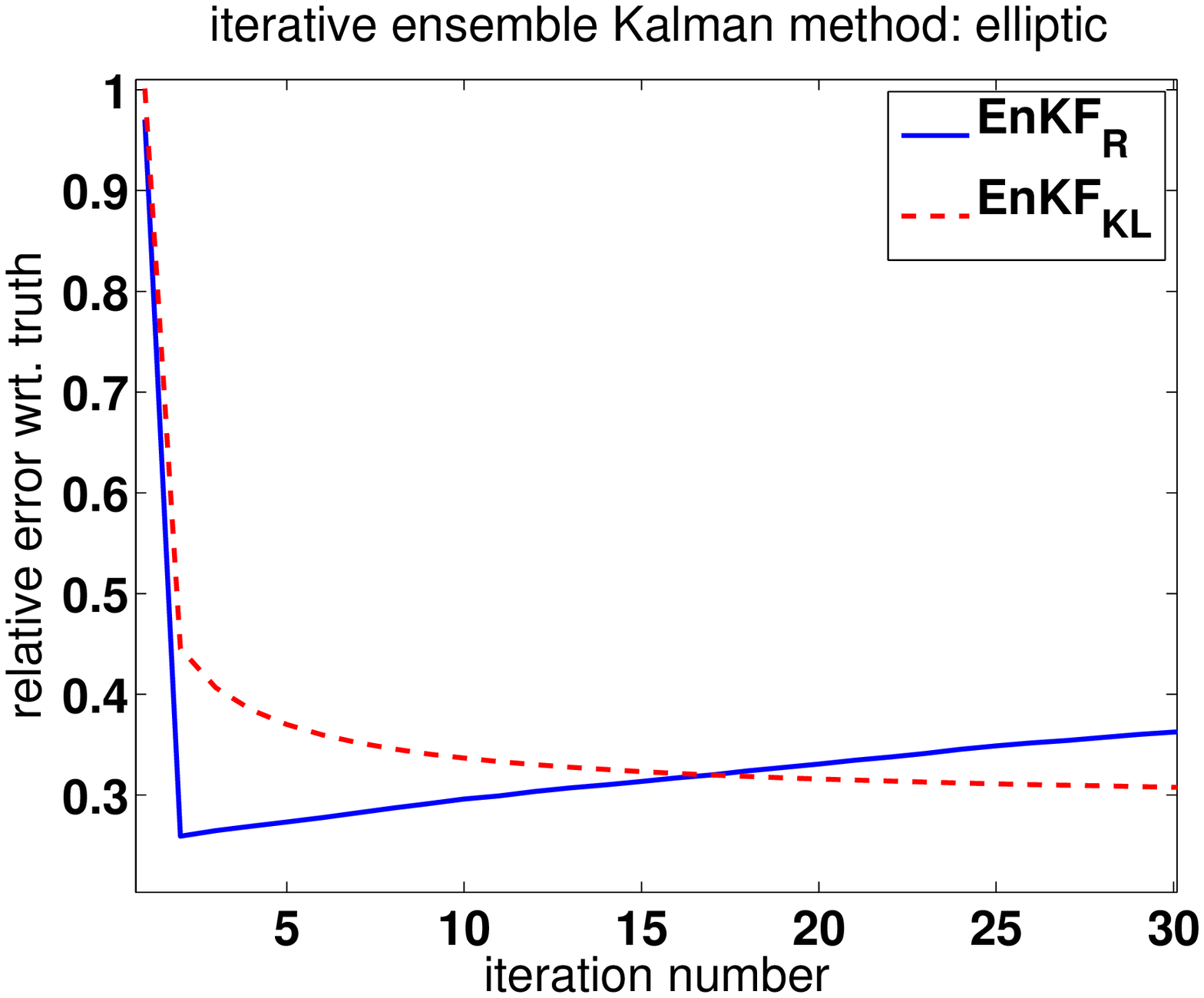}
     \includegraphics[width=.47\textwidth]{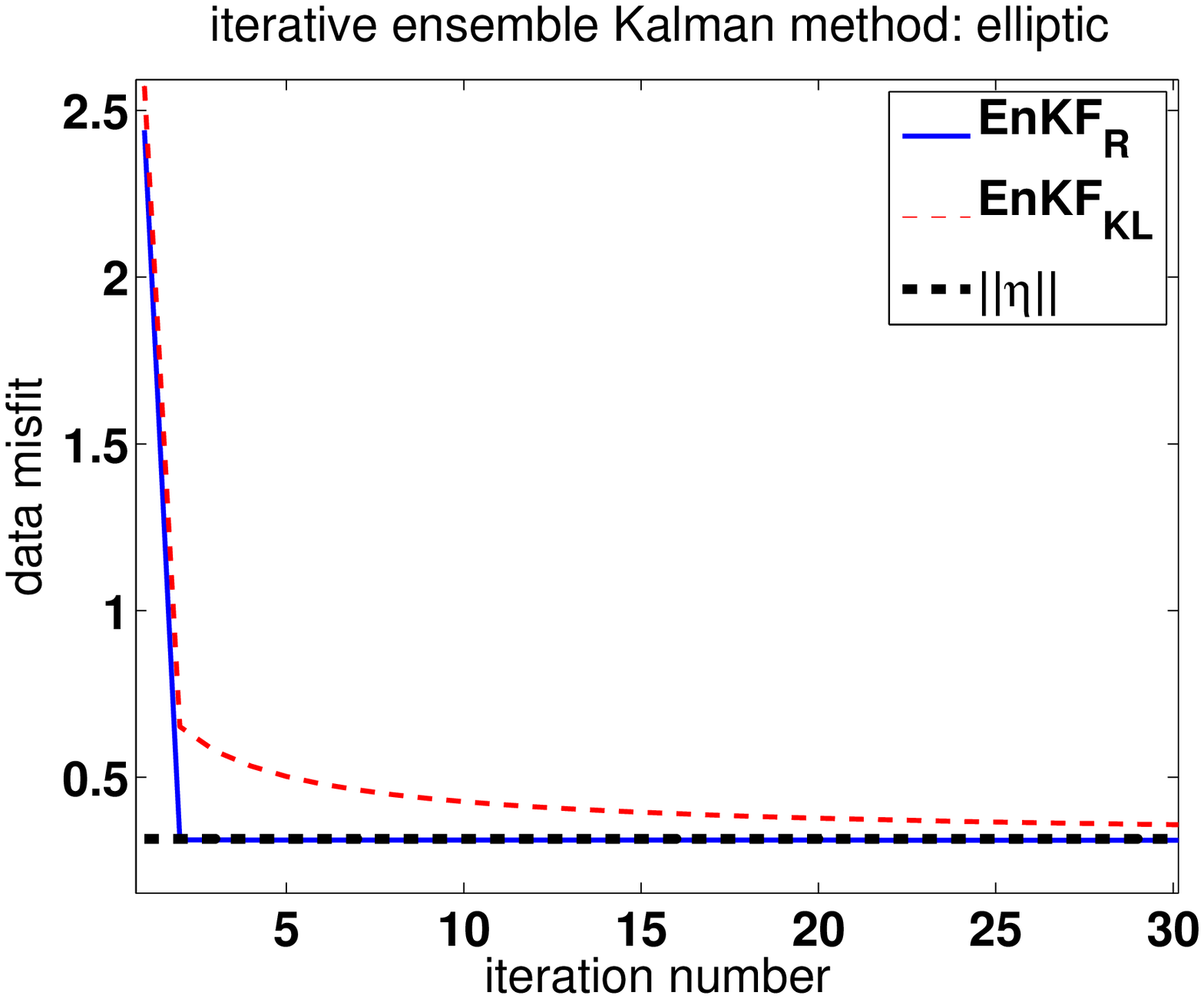}
\end{center}
\caption{Performance of EnKF$_{\rm R}$ and EnKF$_{\rm KL}$.  Left: Relative error with respect to the truth. Right: Data misfit.  }
\label{ped_ensembles}
\end{figure}

\begin{figure}[h]
\begin{center}
       \includegraphics[width=.8\textwidth]{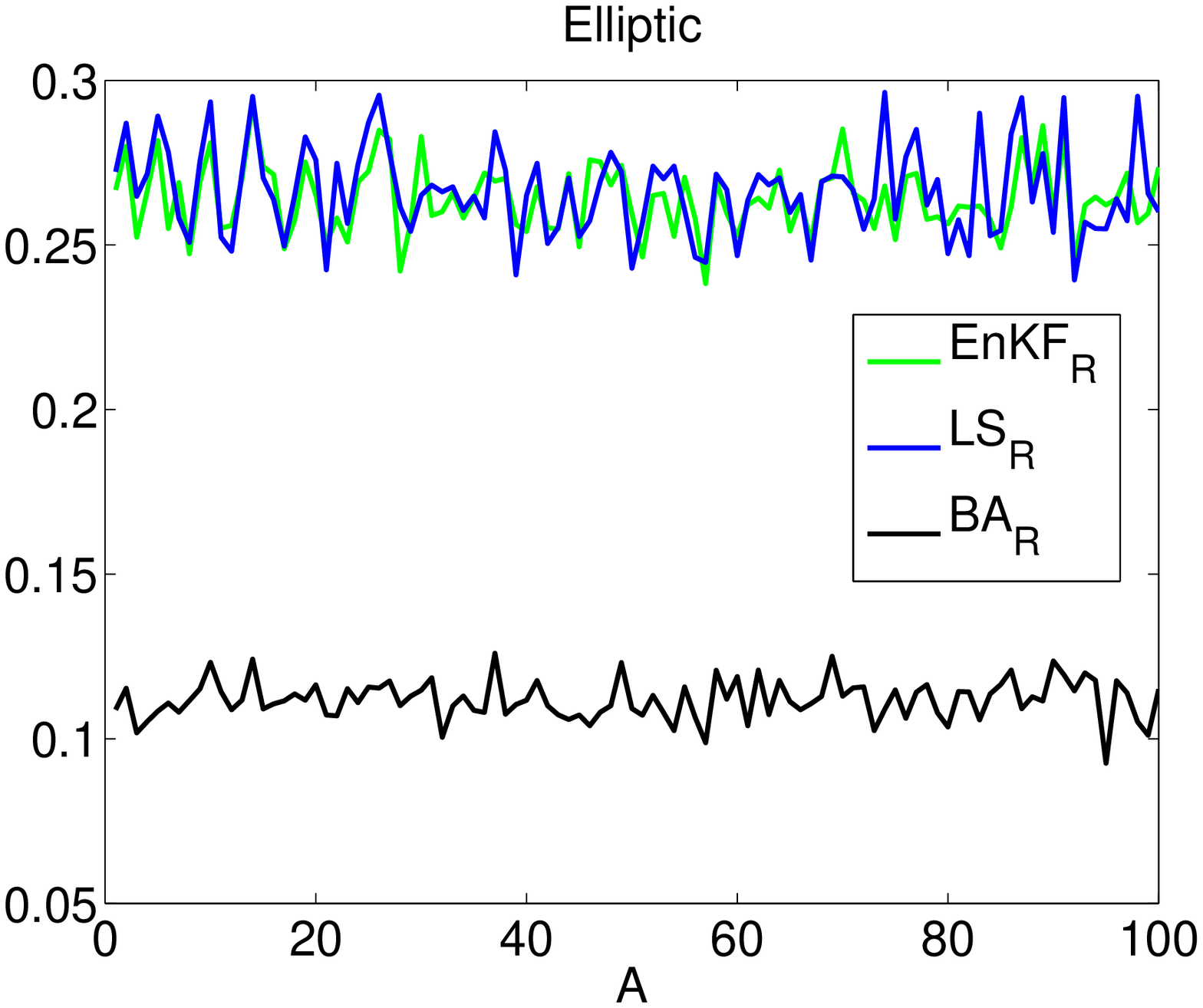}
\end{center}
\caption{Comparison over different subspaces $\cA$, generated from draws from the prior, of the relative errors of one iteration of EnKF$_{\rm R}$ versus
(Tikhonov regularized) least squares (LS$_{\rm R}$) and the best approximation (BA$_{\rm R}$).}
\label{ped_ensembles2}
\end{figure}

\section{Groundwater flow}
\label{sec6}

Next, we will investigate an inverse problem arising in groundwater modeling. Typically, the calibration of subsurface flow models consist of
estimating geologic properties whose model predictions ``best fit''
the measurements of flow-related quantities. In particular, herein we will 
consider the estimation of the conductivity 
of an aquifer form hydraulic head measurements. 
Similarly to the previous section, 
we find comparable performance of EnKF and LS, with the
latter here regularized in a Newton-CG fashion.
Again, BA is included for comparison. 


\subsection{Setting}
We consider groundwater flow in a two-dimensional confined aquifer whose physical domain is $\Omega=[0,6]\times [0,6]$. The hydraulic conductivity is denoted by $K$. The flow in the aquifer, is described in terms of the piezometric head $h(x)$ ($x\in \Omega$) which, in the steady-state is governed by the following equation \cite{Bear}
\begin{eqnarray}\label{sec4:1}
-\nabla\cdot e^{u}\nabla h&=f &\qquad\textrm{in}~~\Omega
\end{eqnarray}
where $u\equiv \log{K}$ and $f$ is defined by 
\begin{eqnarray}\label{sec4:4}
f(x_{1},x_{2})=\left\{\begin{array}{ccc}
0 &\textrm{if}& 0<x_{2}\leq 4,\\
137&\textrm{if}& 4<x_{2}<5,\\
274&\textrm{if}& 5\leq x_{2}<6.\end{array}\right.
\end{eqnarray}
 We consider the following boundary conditions
\begin{eqnarray}\label{sec4:3}
\fl h(x,0)=100, \qquad \frac{\partial h}{\partial x}(6,y)=0,\qquad
-e^{u}\frac{\partial h}{\partial x}(0,y)=500,\qquad   \frac{\partial h}{\partial y}(x,6)=0,
\end{eqnarray}
For the physical interpretation of the source term and boundary
conditions  (\ref{sec4:4})-(\ref{sec4:3}), we refer the reader to
\cite{Carrera} where a similar model was used as a benchmark for
inverse modeling in groundwater flow. A similar model was also studied
in \cite{Hanke, IglesiasDawson1} also in the context of parameter
identification.  We will be interested in the inverse problem of 
recovering the hydraulic conductivity, or more precisely its logarithm
$u$, from noisy pointwise measurements of the piezometric head $h$.
This is a model for the situation in groundwater applications
where observations
of head are used to infer the conductivity of the aquifer. 

\subsection{Numerical Results}

  We let $\cG(u) = \{h(x_k)\}_{k \in \bbK}$
where $\bbK$ is some finite set of points in 
$\Omega$ with cardinality $\kappa$.
In particular, $\bbK$ is given by 
the configuration of $N=100$ observation wells displayed in
Figure \ref{gw:3} (Top right).  
We introduce the prior Gaussian
$\mu_0 \sim \cN(\overline{u},C)$, let $u^\dagger \sim \mu_0$, and
simulate data $y = \cG(u^\dagger) + \eta^\dagger$, 
where $\eta^\dagger \sim \cN(0,\Gamma)$. We let $C = \beta  L^{-\alpha}$ with $L\equiv -\Delta$ defined on $D(L)=\{v\in H^{2}(\Omega)| \nabla v\cdot \mathbf{n}=0, ~\textrm{on}~\partial \Omega, \int_{\Omega}v=0\}$. Additionally, we define $\Gamma=\gamma^2 I$ and we choose $\alpha=1.3, \beta=0.5,$ $\overline{u}=4$ 
and $\gamma=7$ fixed. 
We reiterate from Section \ref{sec:PE} that the space 
$\cA={\rm span} \{ \psi^{(j)}\}_{j=1}^J$
will be chosen based on either draws from the prior $\mu_0$, 
with subscript $R$, or on the \KL basis, with
subscript $KL$.

The forward model (\ref{sec4:1})-(\ref{sec4:3}) is discretized with cell-centered finite differences \cite{Mixed}. For the approximation of the LS problem, we implemented the Newton-CG method of \cite{hanke1997regularizing}. We conduct experiments analogous to those of the previous section and the results are shown in Figures \ref{gw_ensembles}, \ref{gw_ensembles2}, \ref{gw:3} and the second column of Table \ref{Table}. These results are very similar to those shown in the previous section for the
linear elliptic problem, demonstrating the robustness of the
observations made in that section for the solution of inverse problems
in general, using the ensemble Kalman methodology herein.

\begin{figure}[h]
\begin{center}
     \includegraphics[width=.45\textwidth]{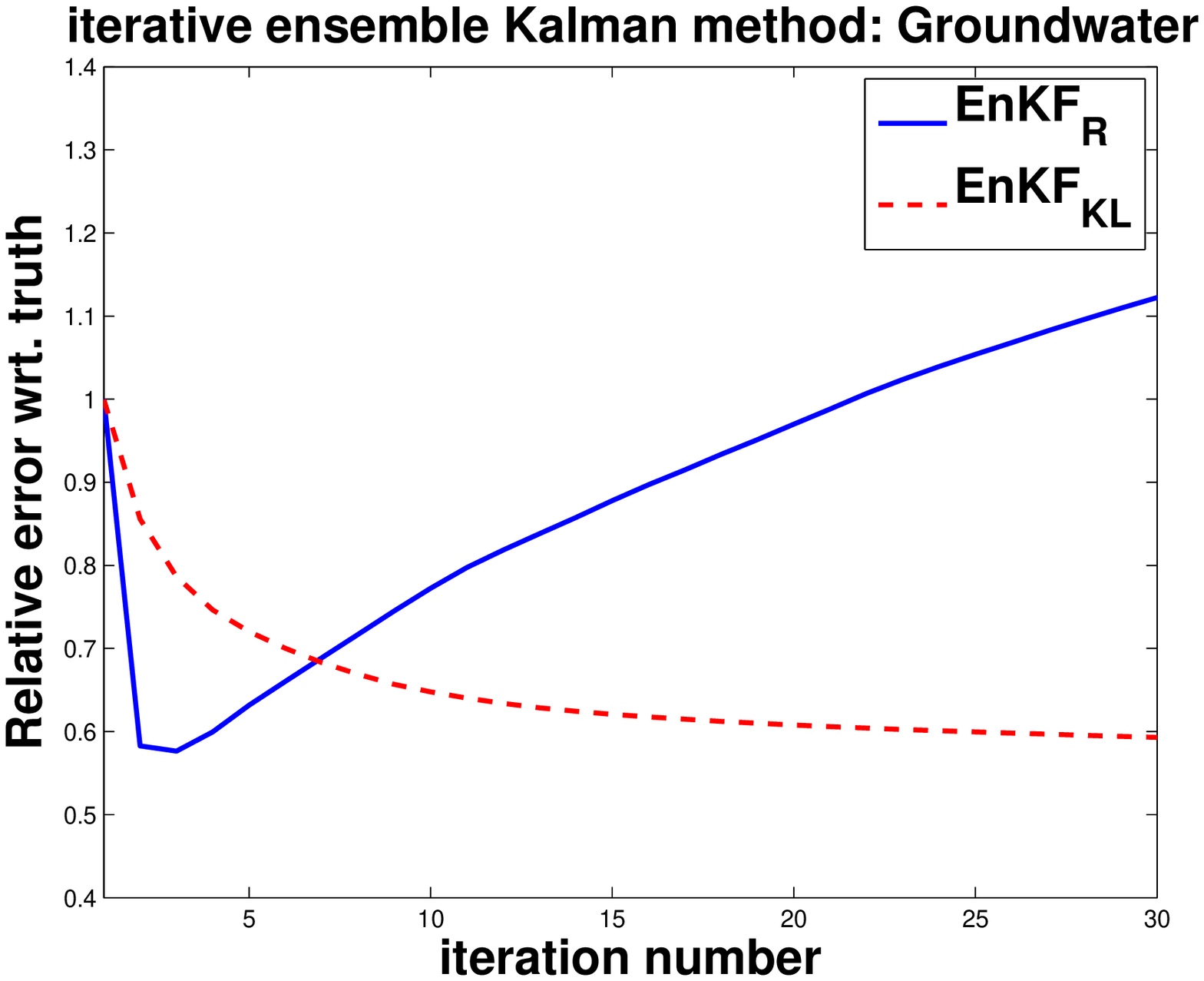}
      \includegraphics[width=.45\textwidth]{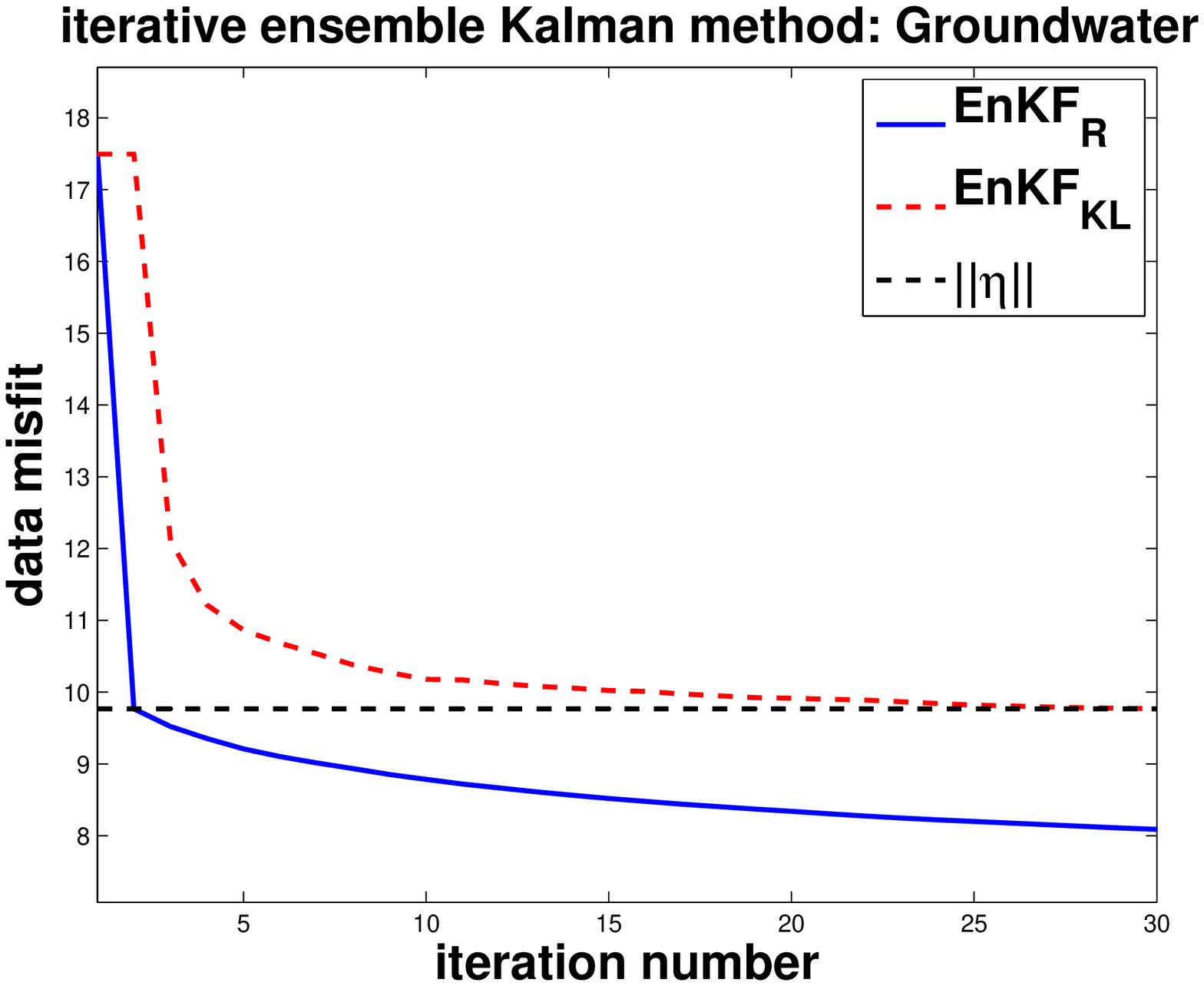}
\end{center}
\caption{Performance of EnKF$_{\rm R}$ and EnKF$_{\rm KL}$. 
Left: Relative error with respect to the truth. Right: Data misfit.  }
\label{gw_ensembles}
\end{figure}

\begin{figure}[h]
\begin{center}
       \includegraphics[width=.8\textwidth]{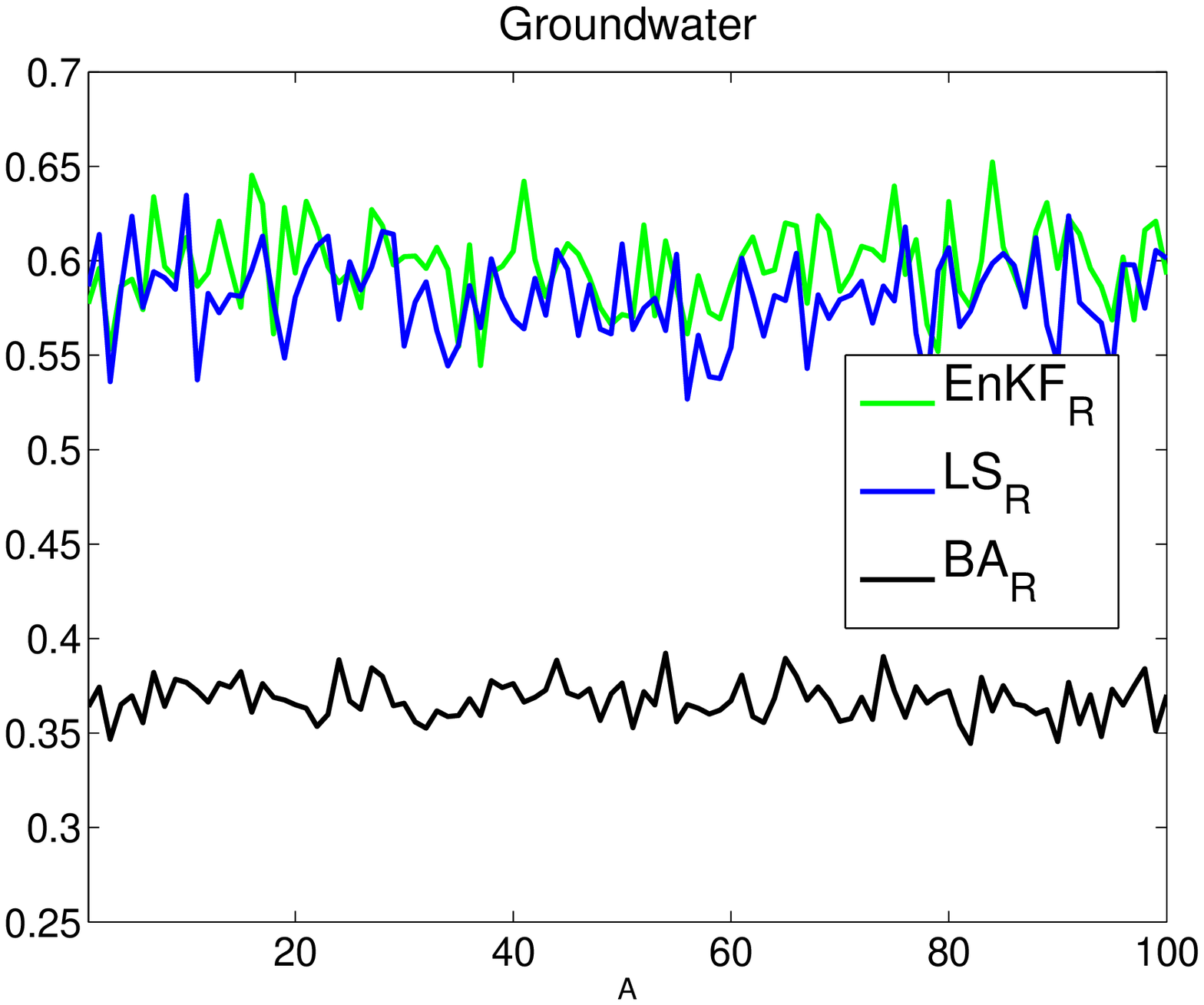}
\end{center}
\caption{Comparison over different subspaces $\cA$, generated from draws from the prior, of the relative errors of one iteration of EnKF$_{\rm R}$ versus
(truncated iteration regularized) least squares (LS$_{\rm R}$) and the
best approximation (BA$_{\rm R}$).} 
\label{gw_ensembles2}
\end{figure}

\begin{figure*}
       \includegraphics[width=0.24\textwidth]{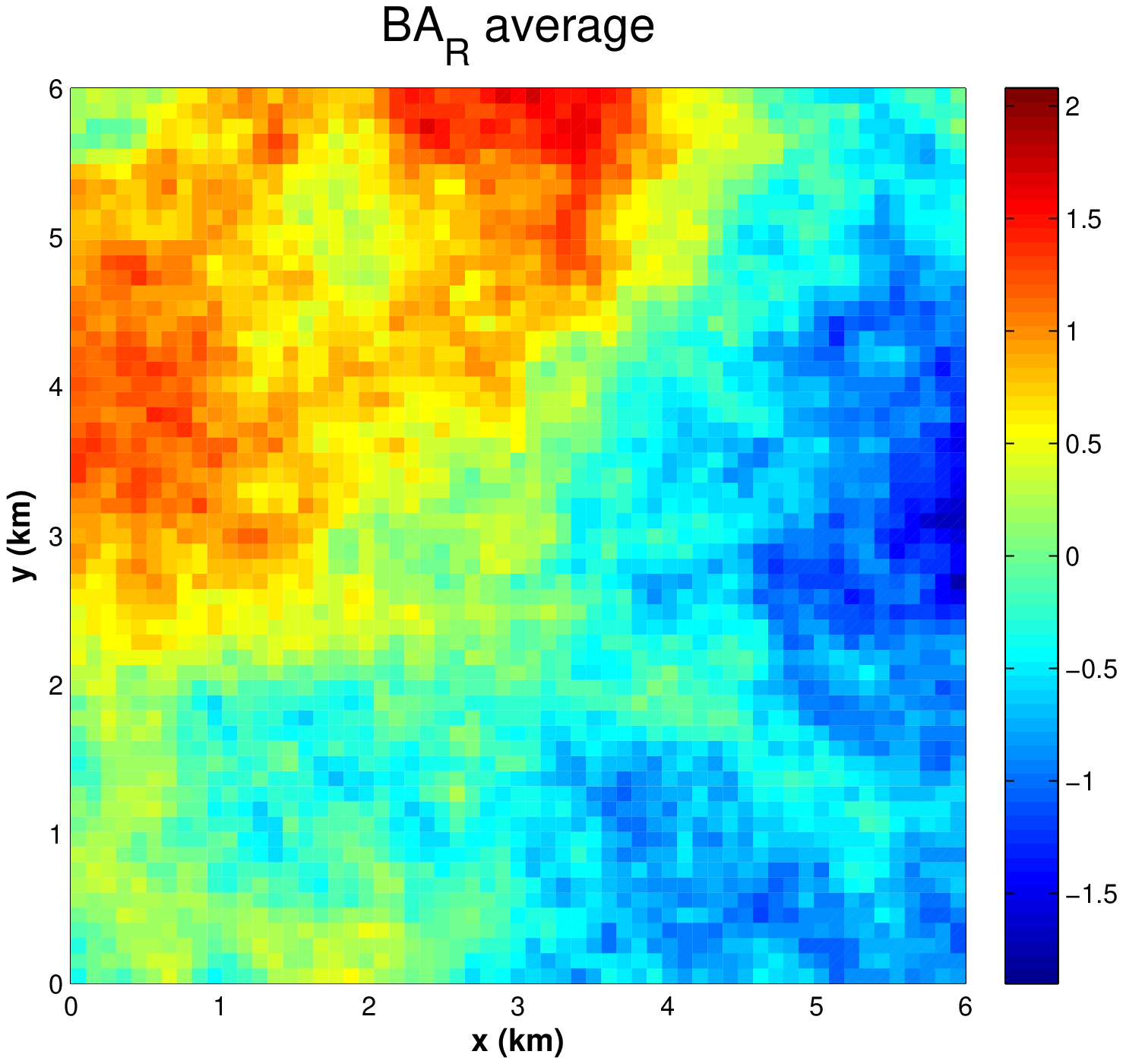}
	 \includegraphics[width=0.24\textwidth]{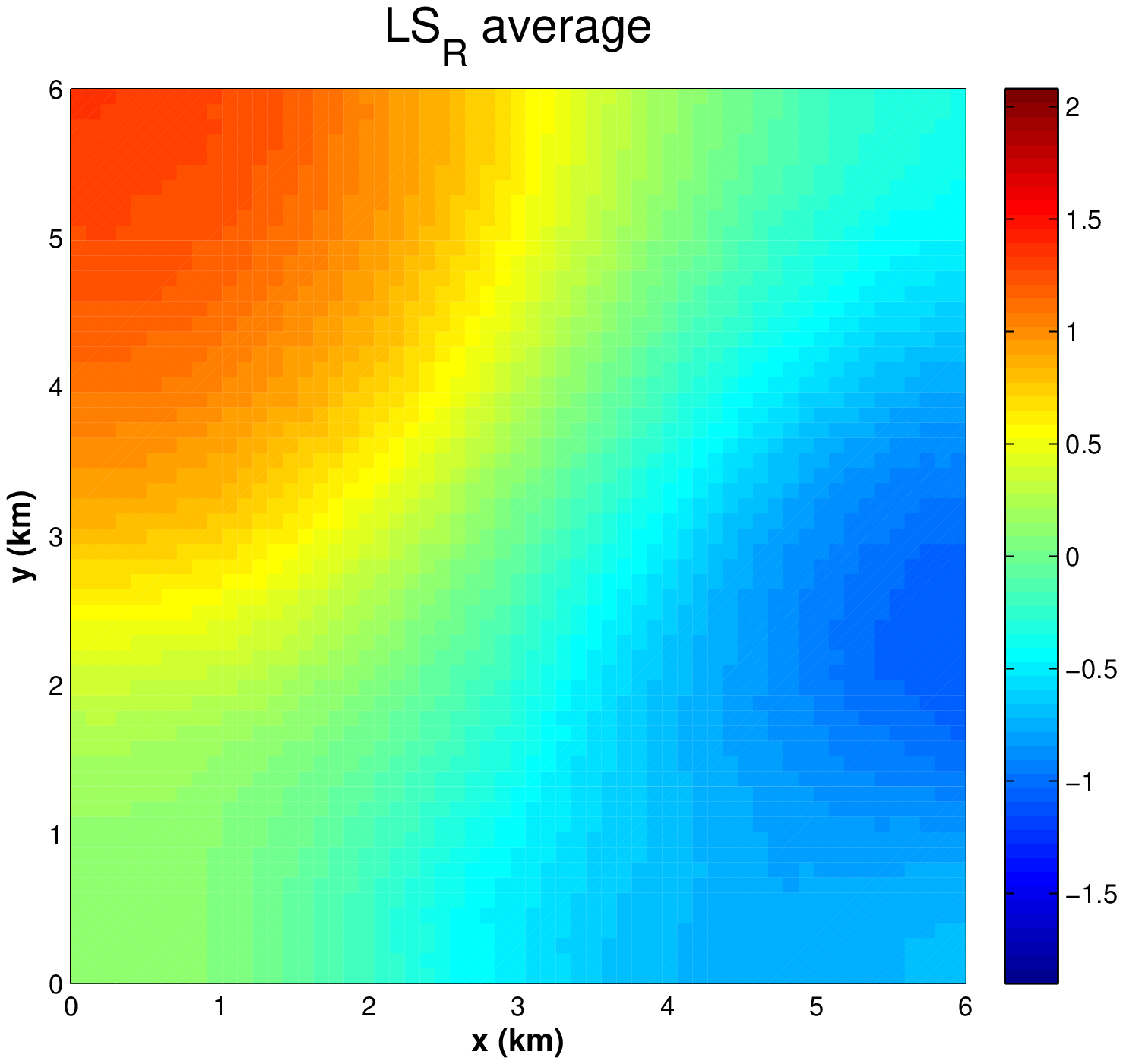}
	 \includegraphics[width=0.24\textwidth]{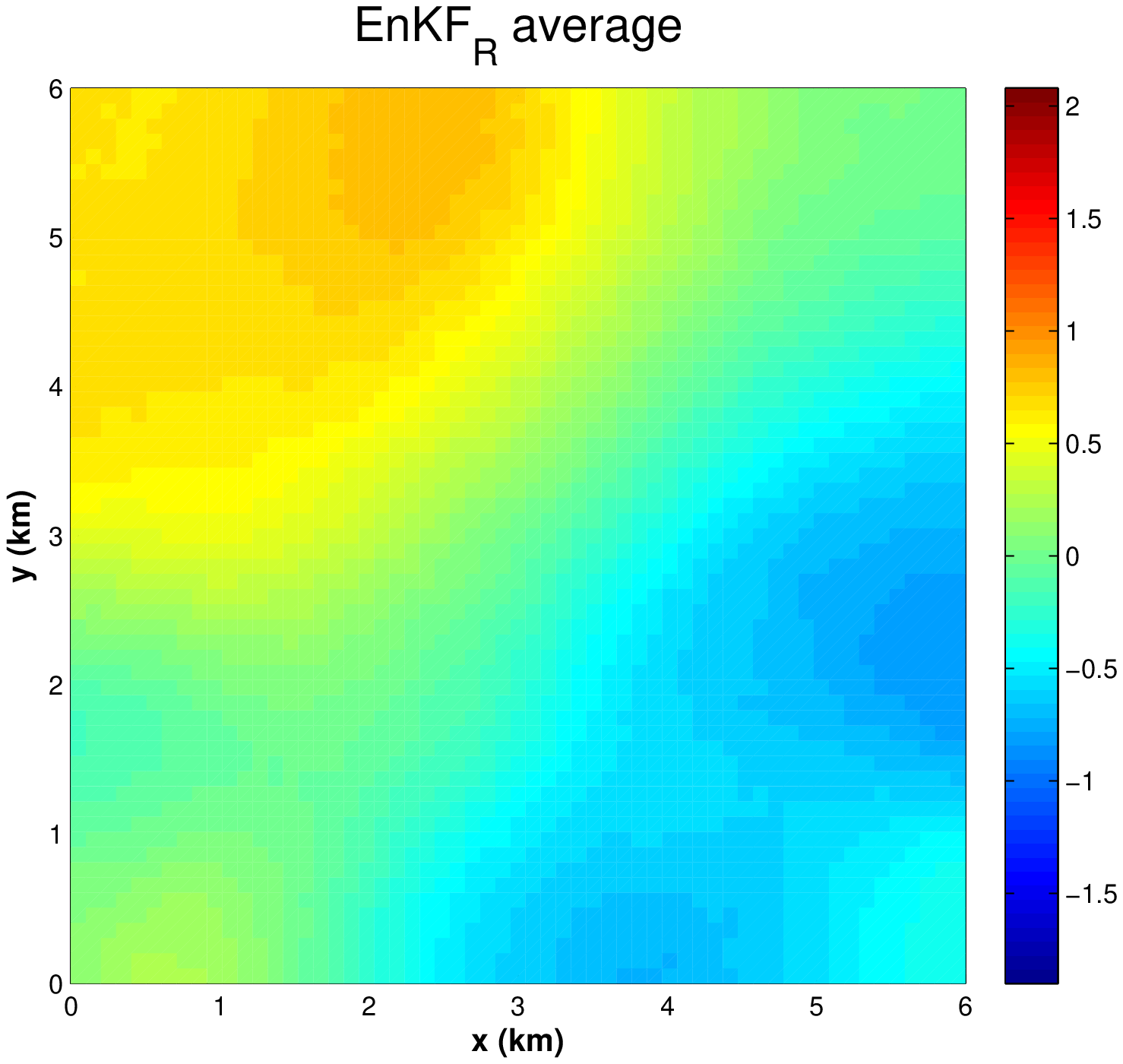}
	 \includegraphics[width=0.24\textwidth]{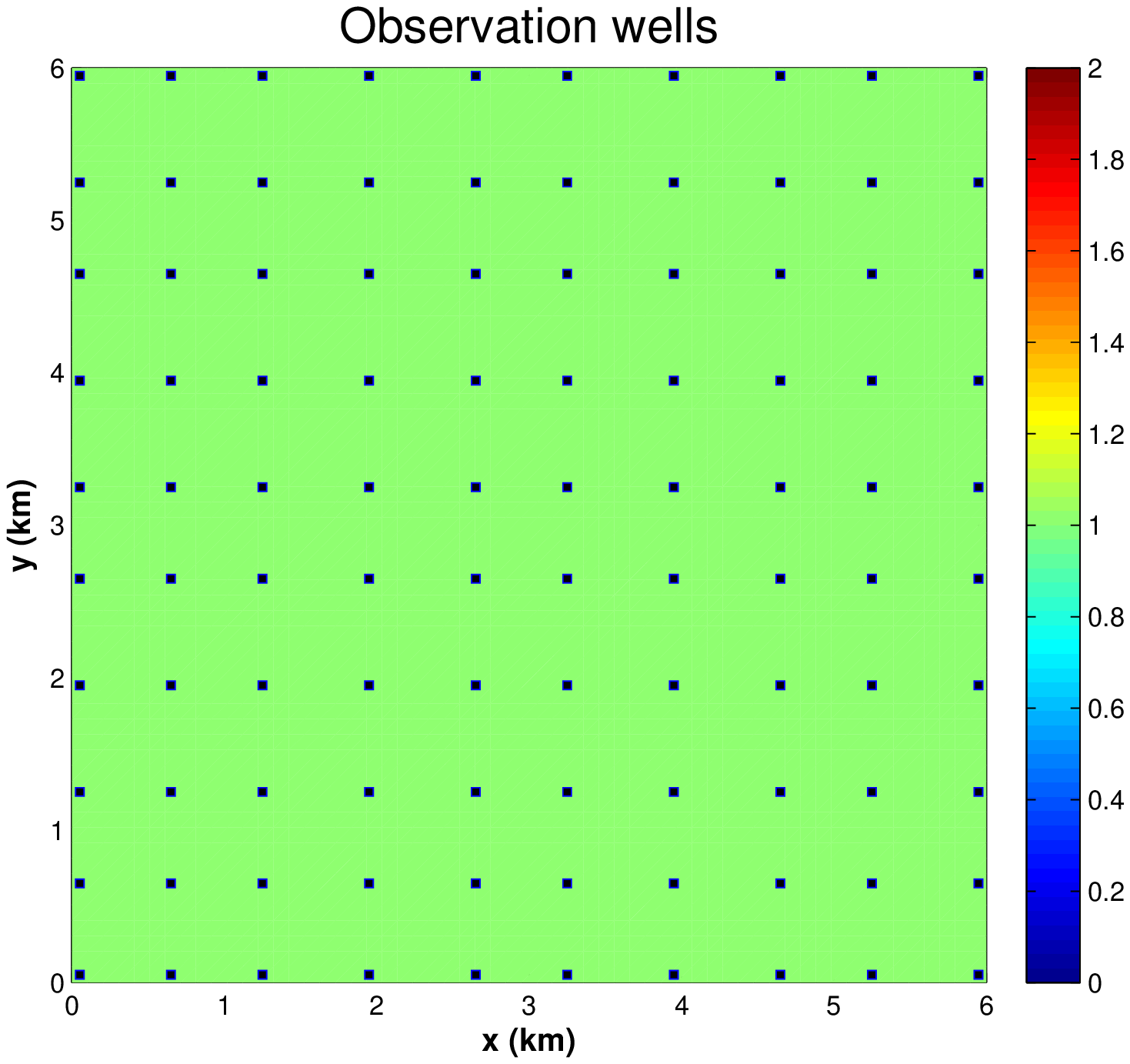}\\
\includegraphics[width=0.24\textwidth]{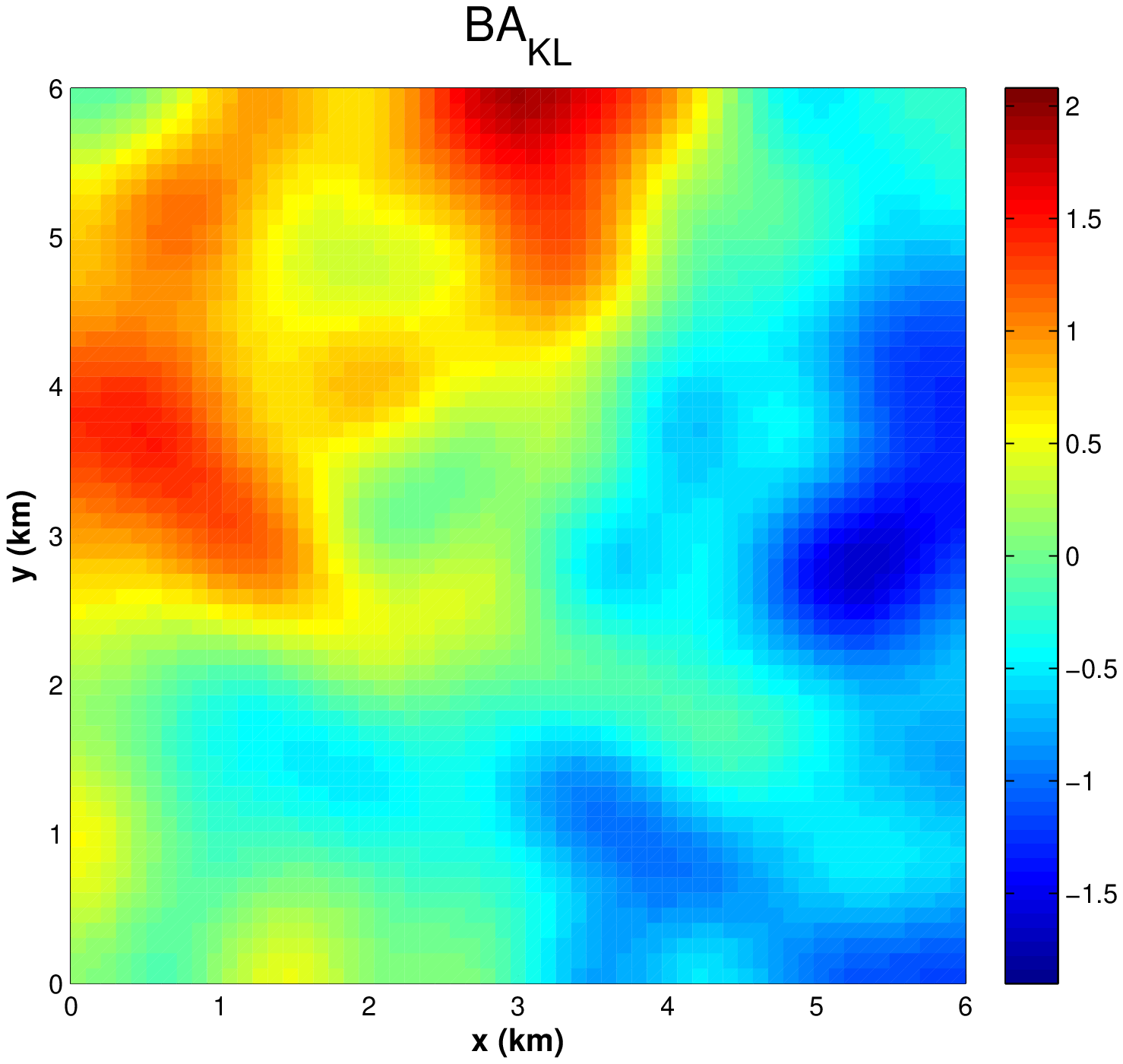}       
\includegraphics[width=0.24\textwidth]{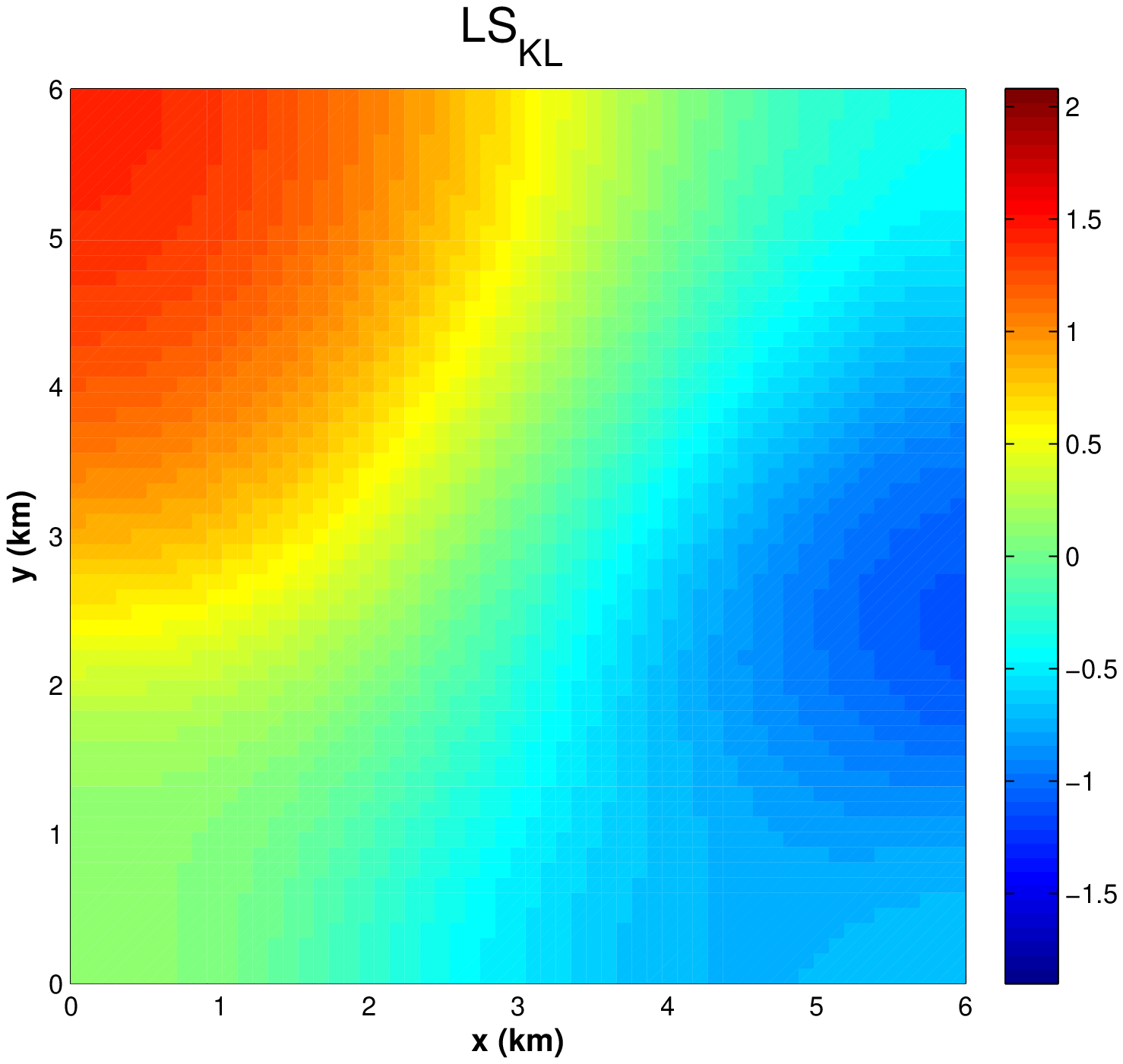}
       \includegraphics[width=0.24\textwidth]{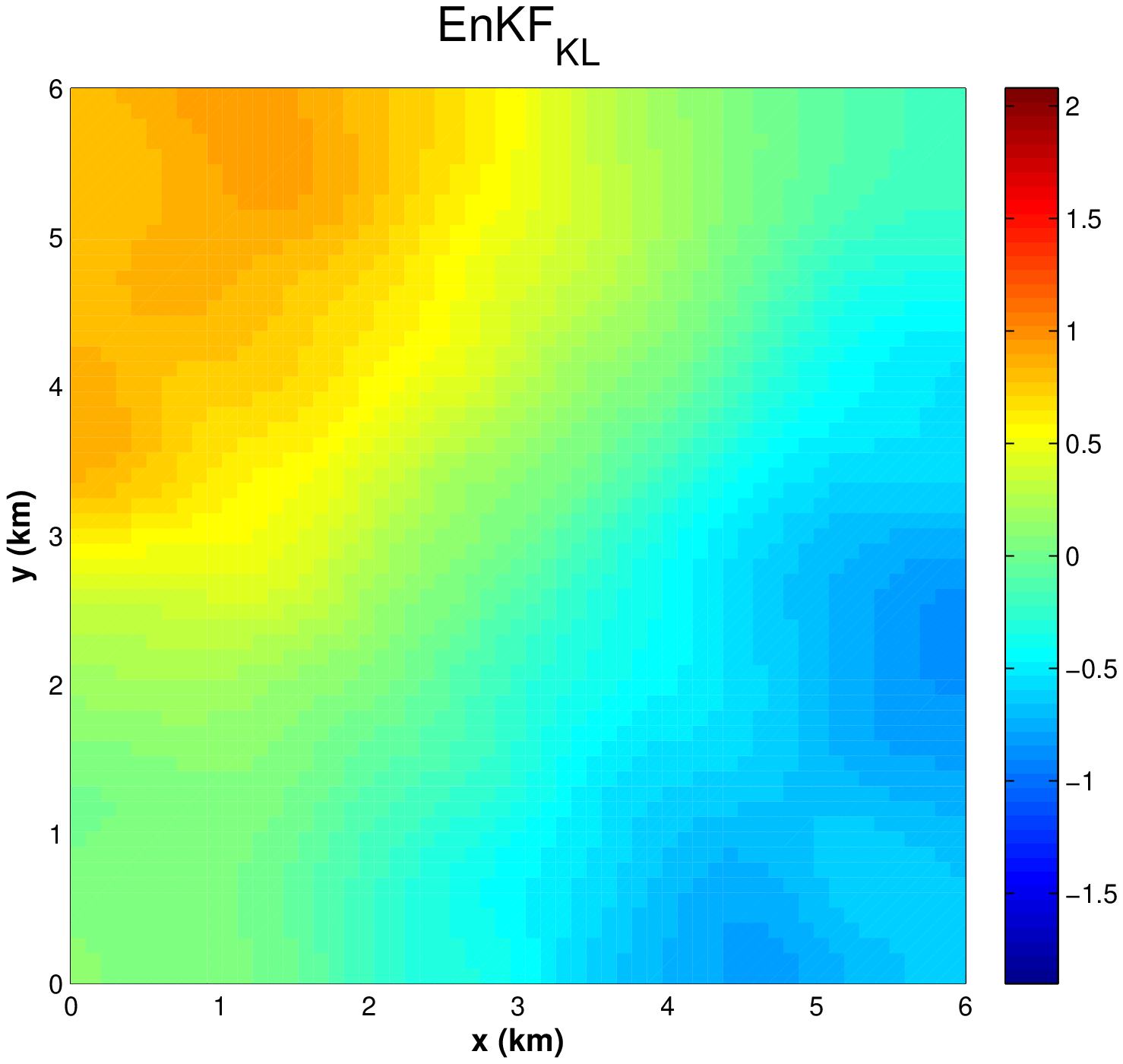} 
       \includegraphics[width=0.24\textwidth]{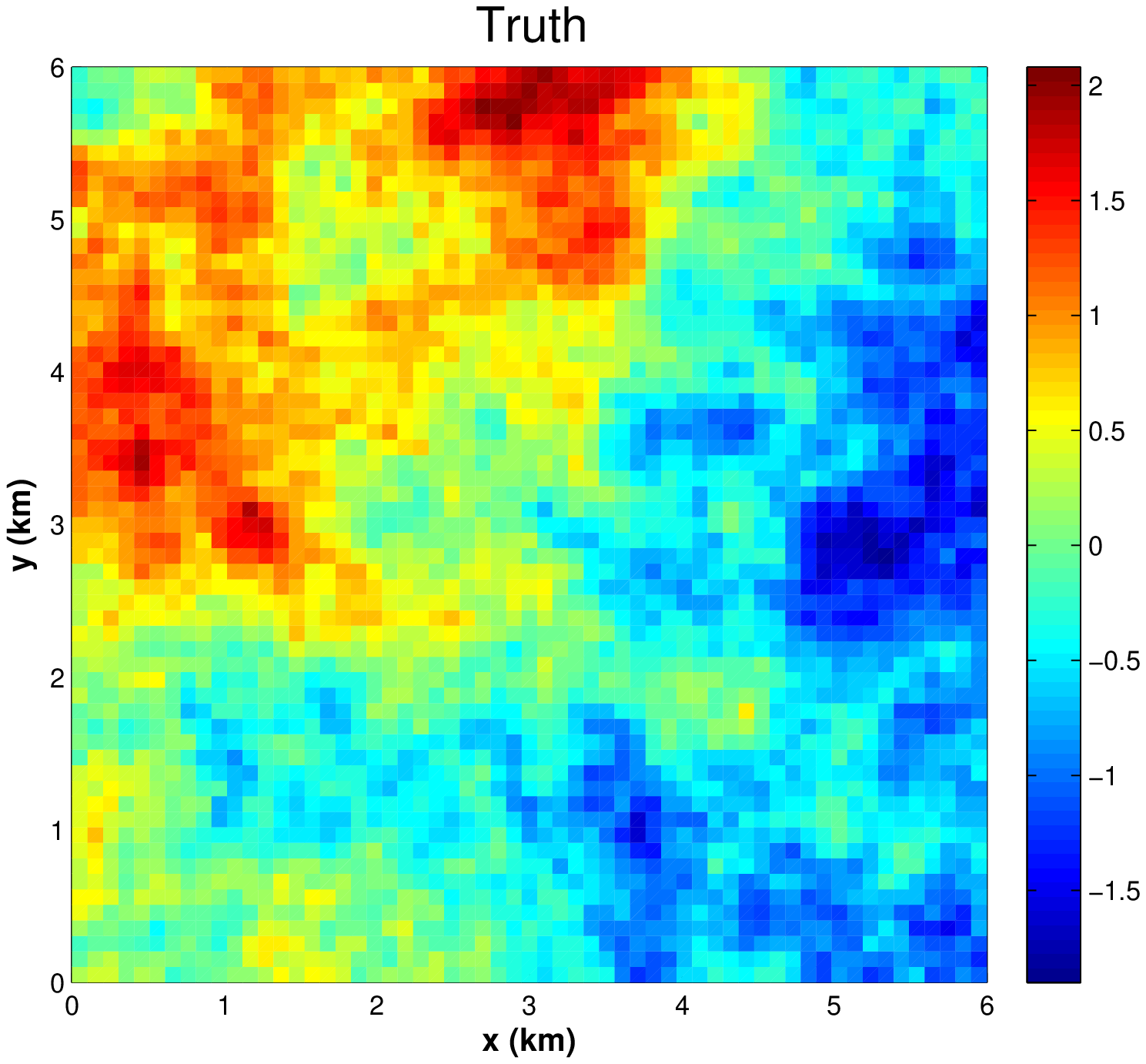}    
\caption{Estimated log $K$.
 Top left: BA$_{\rm R}$, average.
Top middle left: LS$_{\rm R}$, average.
Top middle right: first iteration of EnKF$_{\rm R}$, average.
Top right: measurement well locations. 
Bottom left: BA$_{\rm KL}$. 
Bottom middle left: LS$_{\rm KL}$.  
Bottom middle right: EnKF$_{\rm KL}$.
Bottom right: the truth $u^\dagger$. }\label{gw:3}
\end{figure*}

\section{Navier-Stokes Equation}\label{sec7}

In this section, we consider an inverse problem in
fluid dynamics, which is relevant to data assimilation applications in
oceanography and meteorology.   
In particular, we examine the problem of 
recovering the initial condition of the Navier-Stokes Equation 
(NSE), given noisy pointwise observations of the velocity
field at later times.
We will
investigate a regime in which the combination of viscosity, time-interval,
and truncation of the forward model is such that the exponential ill-posedness of the inverse problem is alleviated.

\subsection{Setting}

We consider the 2D Navier-Stokes
equation on the torus $\bbT^{2} := [-1,1) \times [-1,1)$ with periodic
boundary conditions:
\begin{eqnarray*}
  \begin{array}{cccc}
    \partial_{t}v - \nu \Delta v + v \cdot \nabla v + \nabla p &=& f
    & {\rm for~ all ~}
    (x, t) \in \bbT^{2} \times (0, \infty), \\
    \nabla \cdot v &=& 0 &{\rm for~ all~ }
    (x, t) \in \bbT^{2} \times (0, \infty), \\
    v &=& u &{\rm for~ all ~}
    (x,t) \in \bbT^{2} \times \{0\}.
  \end{array}
  \label{eq:NSE}
\end{eqnarray*}
Here $v \colon \bbT^{2} \times (0, \infty) \to \R^{2}$ is a
time-dependent vector field representing the velocity, $p \colon
\bbT^{2} \times (0,\infty) \to \R$ is a time-dependent scalar field
representing the pressure, $f \colon \bbT^{2} \to \R^{2}$ is a vector
field representing the forcing (which we assume to be time-independent
for simplicity), and $\nu$ is the viscosity.  
We are interested in the inverse problem of
determining the initial velocity field $u$ from
pointwise measurements of the velocity
field at later times.
This is a model for the situation in weather forecasting
where observations of the atmosphere are used to improve
the initial condition used for forecasting.

\subsection{Numerical Results}

We let $t_j=jh$, for $j=1, \dots, J$,
and $\cG(u) = \{v(x_k,t_j)\}_{(j,k) \in \bbK'}$
where $\bbK'=\{1,\cdots,J\} \times \bbK$ and
$\bbK$ is some finite set of points in 
$\Omega$ with cardinality $\kappa$.  In particular, 
we take $\bbK$ to be the set of grid points
in physical space implied by the underlying spectral
truncation used in the numerical integration
(details below). As discussed in Section \ref{sec:PE}, 
we introduce a prior
$\mu_0$, let $u^\dagger \sim \mu_0$, and
simulate data $y = \cG(u^\dagger) + \eta^\dagger$, 
where $\eta^\dagger \sim \cN(0,\Gamma)$. 
As in the previous section, we let $\Gamma=\gamma^2 I$. We fix $\gamma=0.01$ for all
our experiments. The prior $\mu_0$ 
is defined to be the empirical measure supported
on the attractor, i.e. it is 
defined by samples of a trajectory of the forward 
model after convergence to statistical
equilibrium. We use $10^4$ time-steps to construct this
empirical measure. Once again the space 
$\cA={\rm span} \{ \psi^{(j)}\}_{j=1}^J$
is chosen based on $J$ samples from
the (now non-Gaussian) prior $\mu_0$, 
or on a \KL expansion based on 
the empirical mean and variance
from the long forward simulation giving rise to $\mu_0.$
It is important to note that the truth $u^{\dagger}$
is included in the long trajectory used to construct $\mu_0$.
Therefore, some of the initial ensembles drawn from $\mu_0$ 
end up containing a snapshot very close to the truth; such
results are overly optimistic as this is not the typical
situation. It is interesting that all the methods we
study here have comparably overly optimistic results for such ensembles.
scenario of this section, we use the the empirical mean $\bar{u}$ 

The forcing in $f$ is taken to be
$f=\nabla^{\perp}\psi$, where $\psi=\cos(\pi k \cdot x)$ and
$\nabla^{\perp}=J\nabla$ with $J$ the canonical skew-symmetric matrix,
and $k=(5,5)$.  The method used to approximate the forward model
is a modification of a fourth-order Runge-Kutta method,
ETD4RK \cite{cox2002exponential}, in which the Stokes semi-group is
computed exactly by working in the incompressible Fourier basis
$\{\psi_{k}(x)\}_{k \in {\mathbb Z}^2\backslash\{0\}}$, and Duhamel's
principle (variation of constants formula) is used to incorporate the
nonlinear term.  We use a time-step of $dt = 0.005$.
Spatially, a Galerkin spectral method
\cite{hesthaven2007spectral} is used, in the same basis,
and the convolutions arising from products in the nonlinear term are
computed via FFTs.  
We use a double-sized domain in each dimension,
padded with zeros, resulting in $64^2$ grid-point FFTs, and only
half the modes in each direction are retained when transforming back
into spectral space again.  This prevents aliasing, which is avoided as
long as more than one third of the domain in which FFTs are computed 
consists of such padding with zeros.  
The dimension of
the attractor is determined by the viscosity parameter $\nu$.  For the
particular forcing used there is an explicit steady state for all
$\nu>0$ and for $\nu \geq 0.035$ this solution is stable (see
\cite{majda2006non}, Chapter 2 for details).  As $\nu$ decreases the
flow becomes increasingly complex and we focus 
subsequent studies of the inverse problem 
on the mildly chaotic regime which arises
for  $\nu = 0.01$. 
Regarding observations, we let $h = 4 \times dt =
0.02$ and take $J=10$, so that $T=0.2$.   
The observations are made at all numerically 
resolved, and hence observable, wavenumbers in the system;
hence $K=32^2$, because of the padding to avoid
aliasing. 

The numerical results resulting from
these experiments are displayed in
in Figures \ref{Fign1}, \ref{Fign1B}, \ref{Fign2} and the third column of Table \ref{Table}.
The results are very similar to those of the previous two
sections, qualitatively: EnKF and LS 
type methods perform comparably, in both the case of random and \KL
based initial draws; furthermore the lower bound 
produced by BA type methods is of similar order of magnitude
to the EnKF-based methods although the actual error is, of
course, smaller.  However, we also see that the behavior 
of the iterated EnKF$_{\rm R}$ is quite different 
from what we observed in the previous sections since it
decreases monotonically. We conjecture
that this is because of the mildly ill-posed nature of this
problem. Indeed we have repeated the results (not shown) of this section at higher viscosity $\nu=0.1$, where linear damping in the
forward model induces greater ill-posedness, and confirmed
that we recover results for the iterated
EnKF$_{\rm R}$ which are similar to those in the previous sections. Finally, we have checked that the behavior of the error
in the iterated EnKF$_{\rm R}$ is repeatable for Gaussian prior
$\mu_0$, and hence is not a result of non-Gaussian
ensembles used in the figures.

\begin{figure}[h]
\begin{center}
     \includegraphics[width=.45\textwidth]{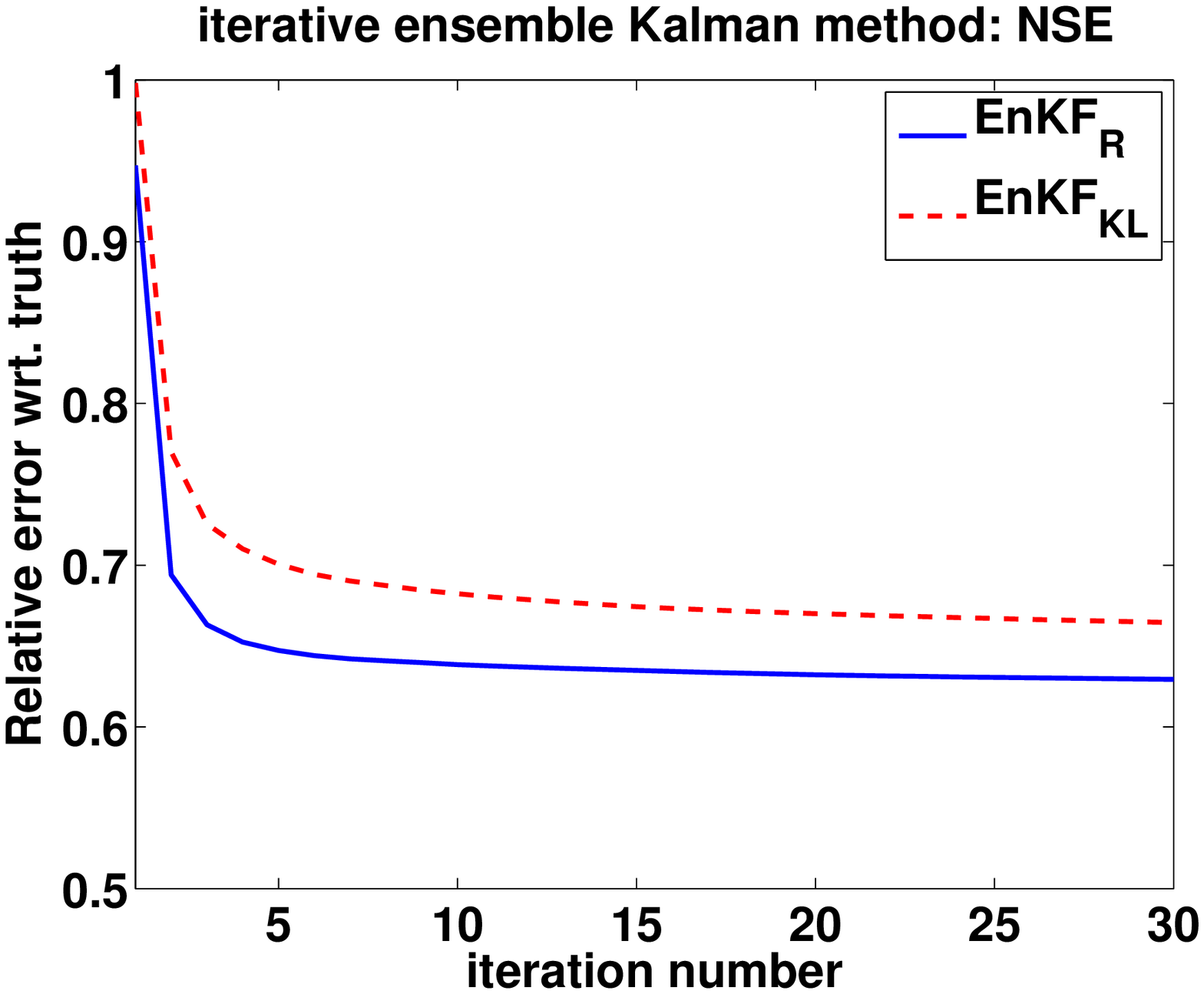}
     \includegraphics[width=.45\textwidth]{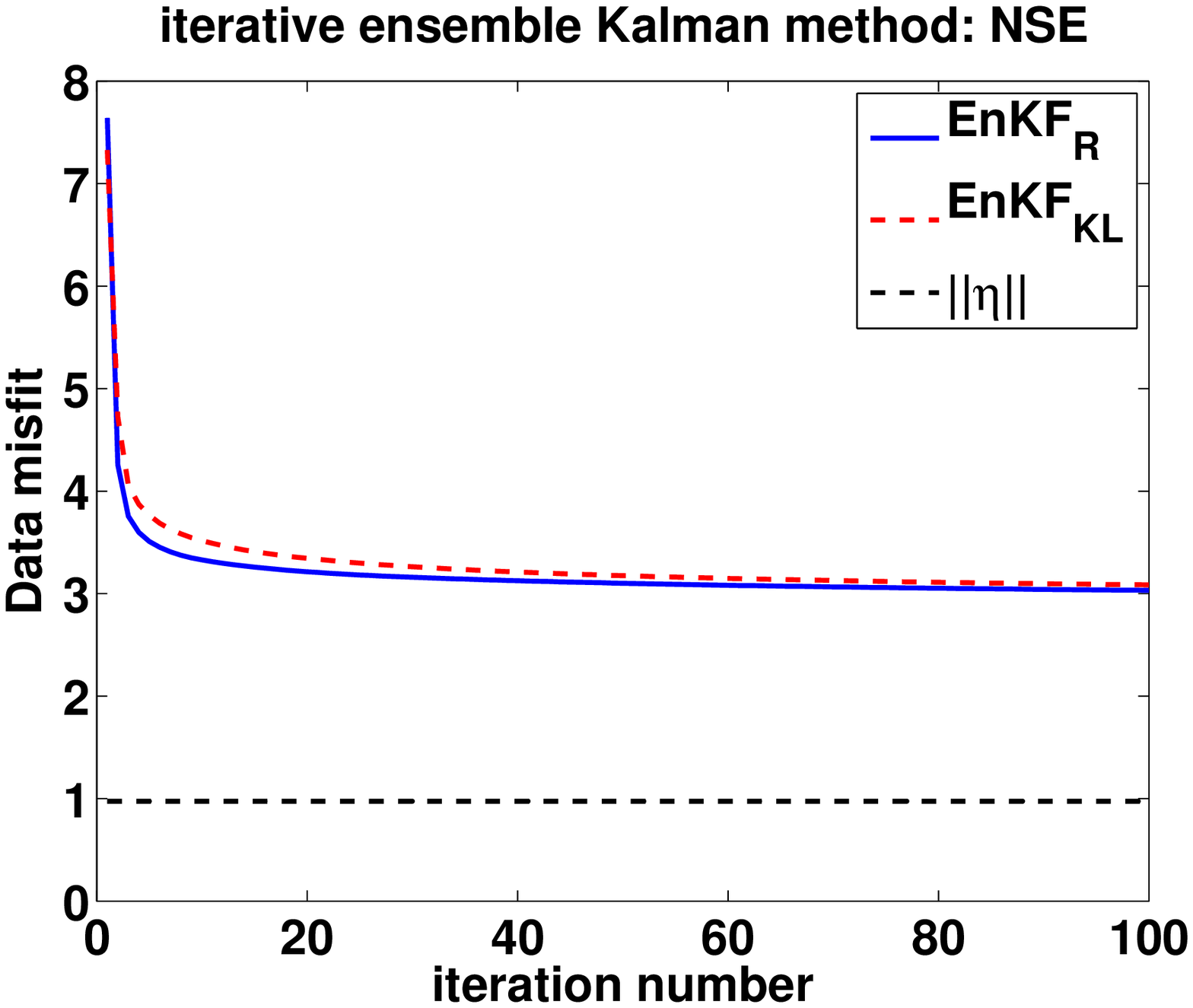}
\end{center}
\caption{Performance of EnKF$_{\rm R}$ and EnKF$_{\rm KL}$. Left: Relative error with respect to the truth. Right: Data misfit.}
\label{Fign1}
\end{figure}

\begin{figure}[h]
\begin{center}
       \includegraphics[width=.8\textwidth]{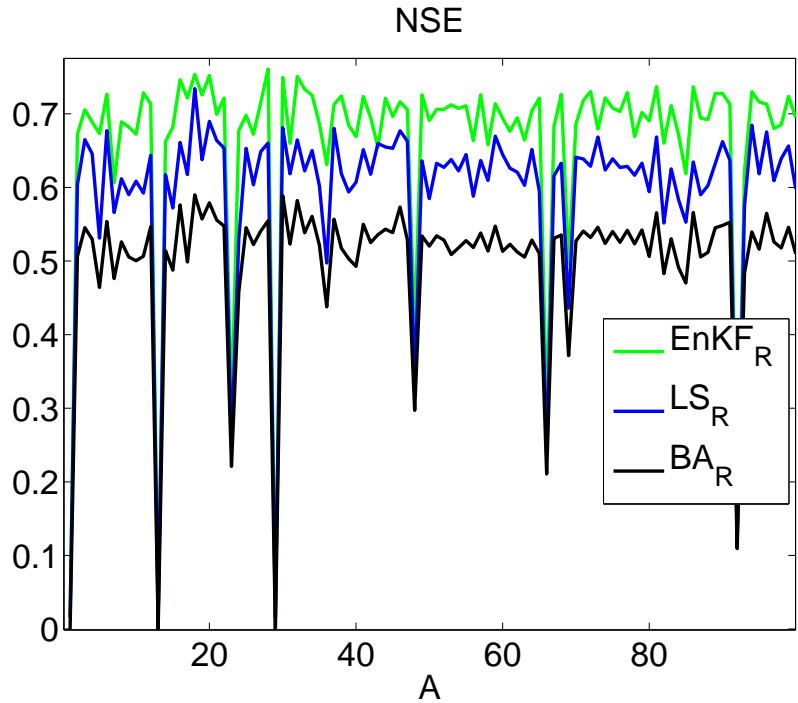}
\end{center}
\caption{Comparison over an ensemble of random ensembles $\cA$
of the relative errors of one iteration of EnKF$_{\rm R}$ versus
least squares (LS$_{\rm R}$) and the best approximation (BA$_{\rm R}$). 
}
\label{Fign1B}
\end{figure}

\begin{figure*}
       \includegraphics[width=0.24\textwidth]{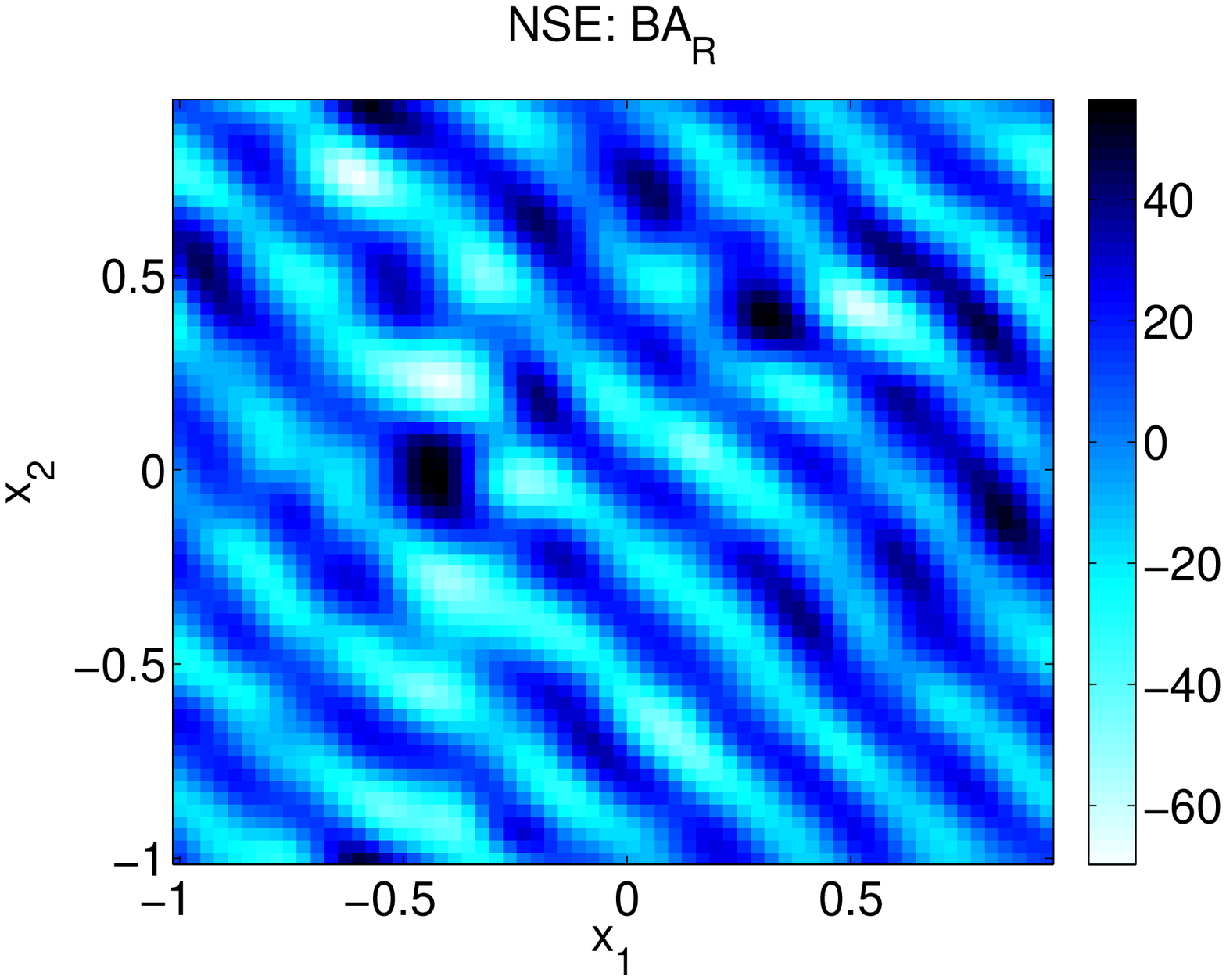}
	 \includegraphics[width=0.24\textwidth]{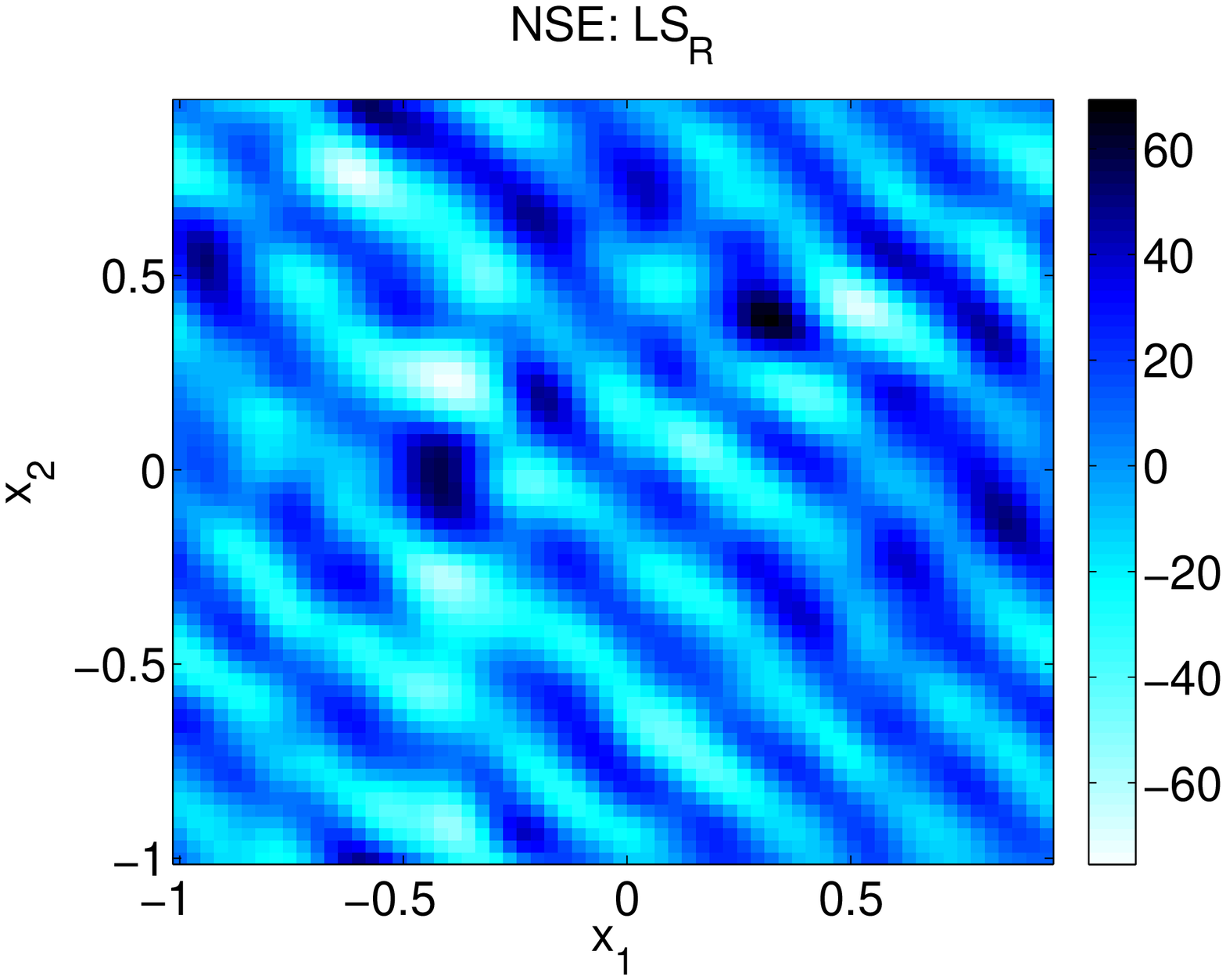}
	 \includegraphics[width=0.24\textwidth]{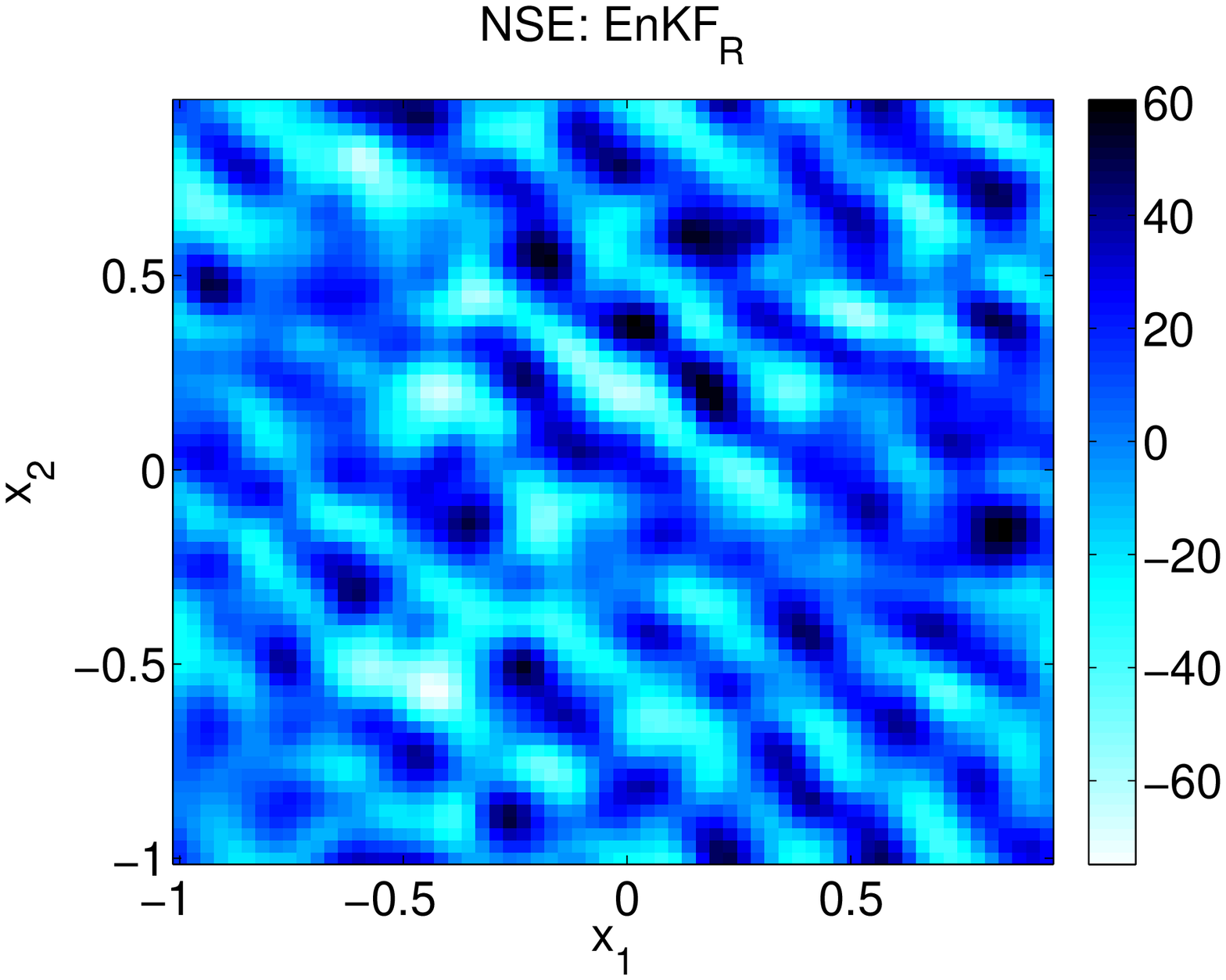}
	 \includegraphics[width=0.24\textwidth]{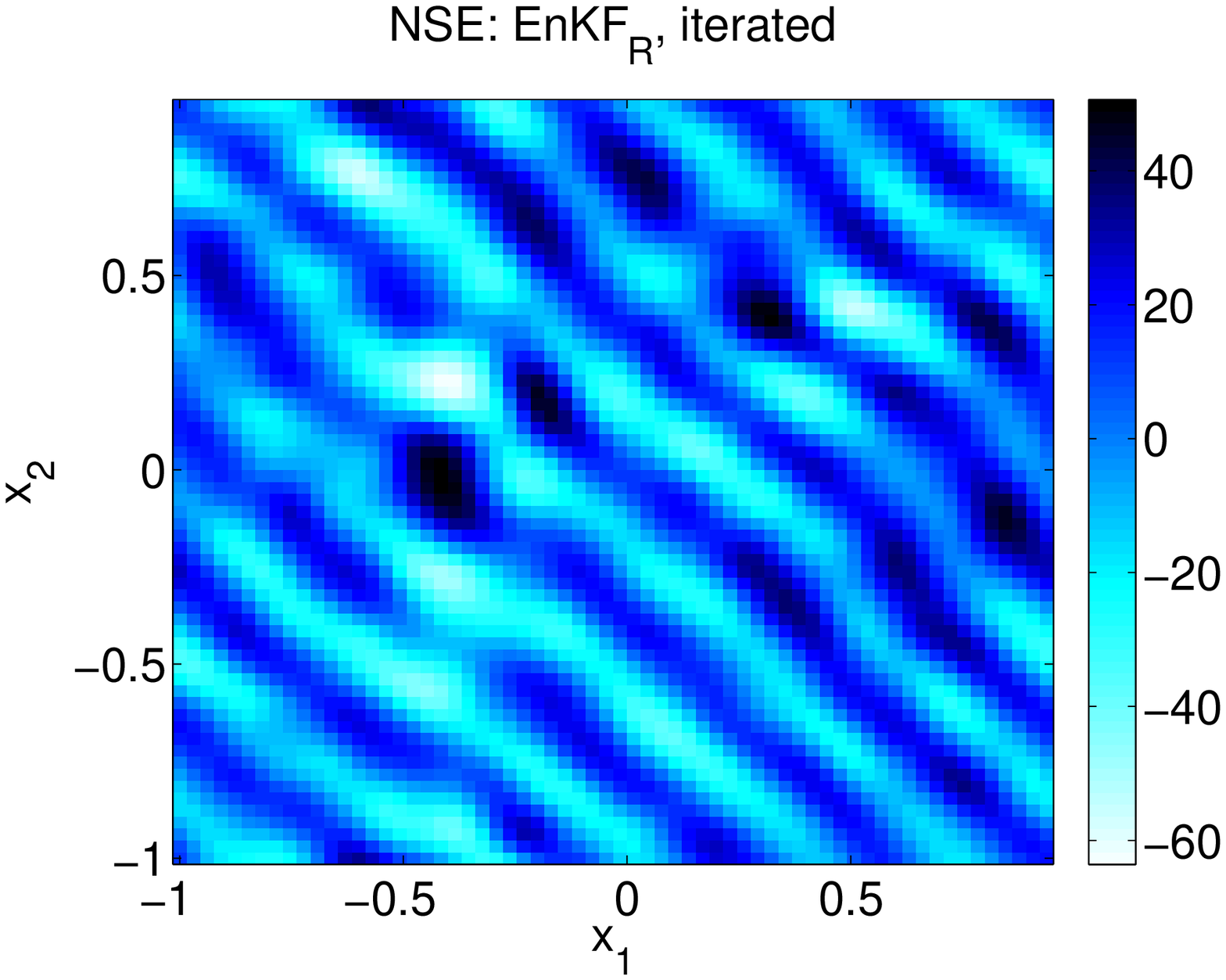}\\
\includegraphics[width=0.24\textwidth]{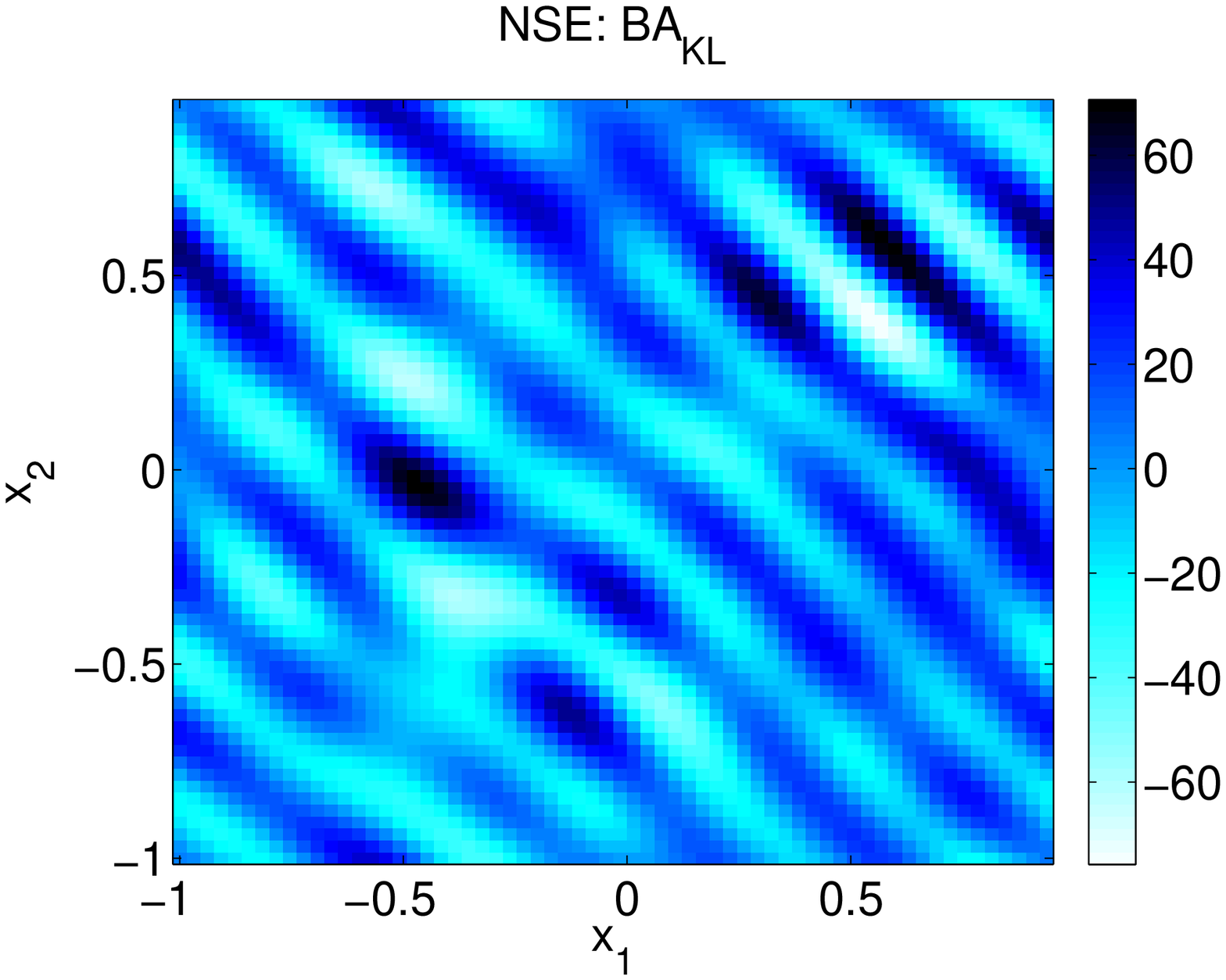}       
\includegraphics[width=0.24\textwidth]{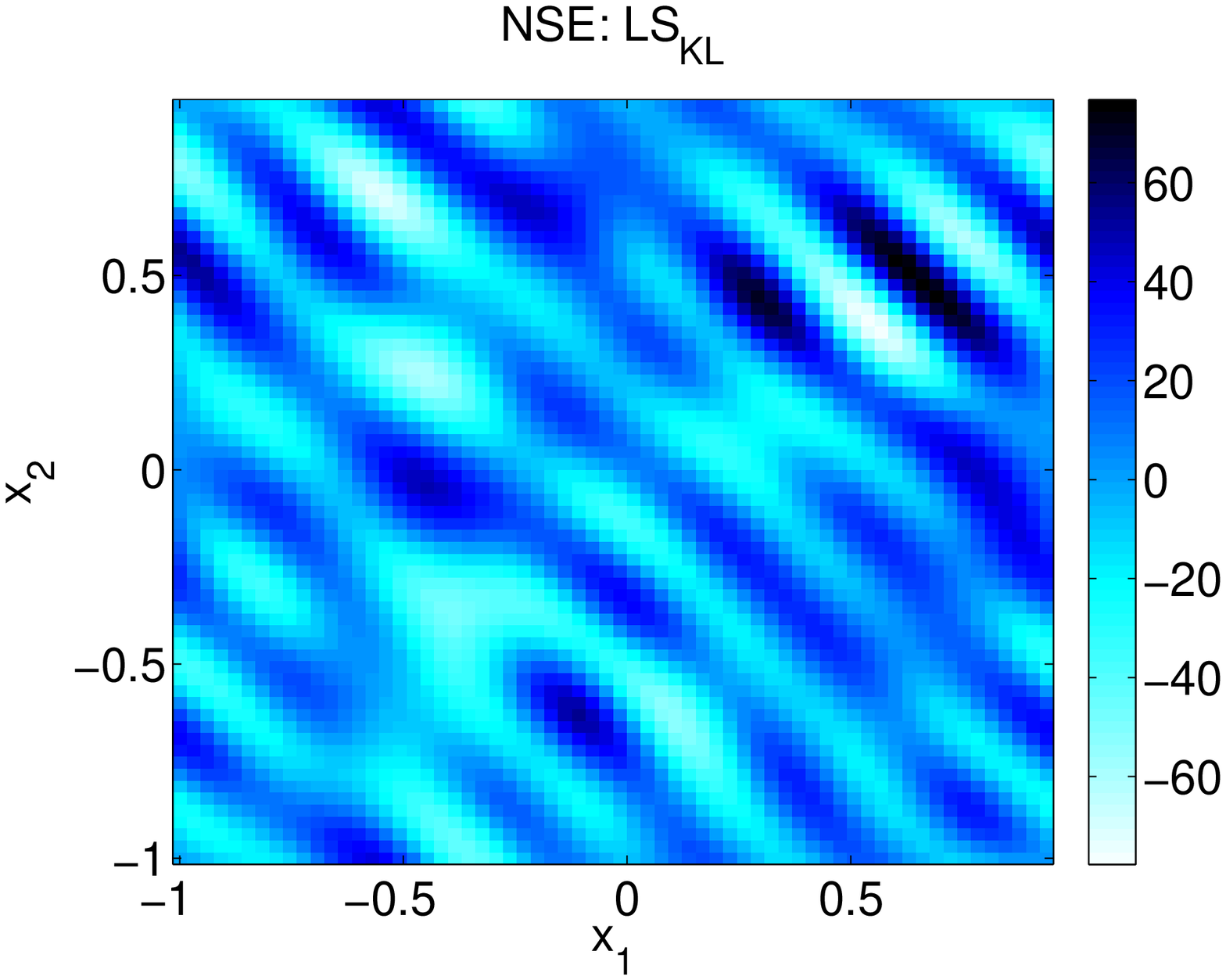}
       \includegraphics[width=0.24\textwidth]{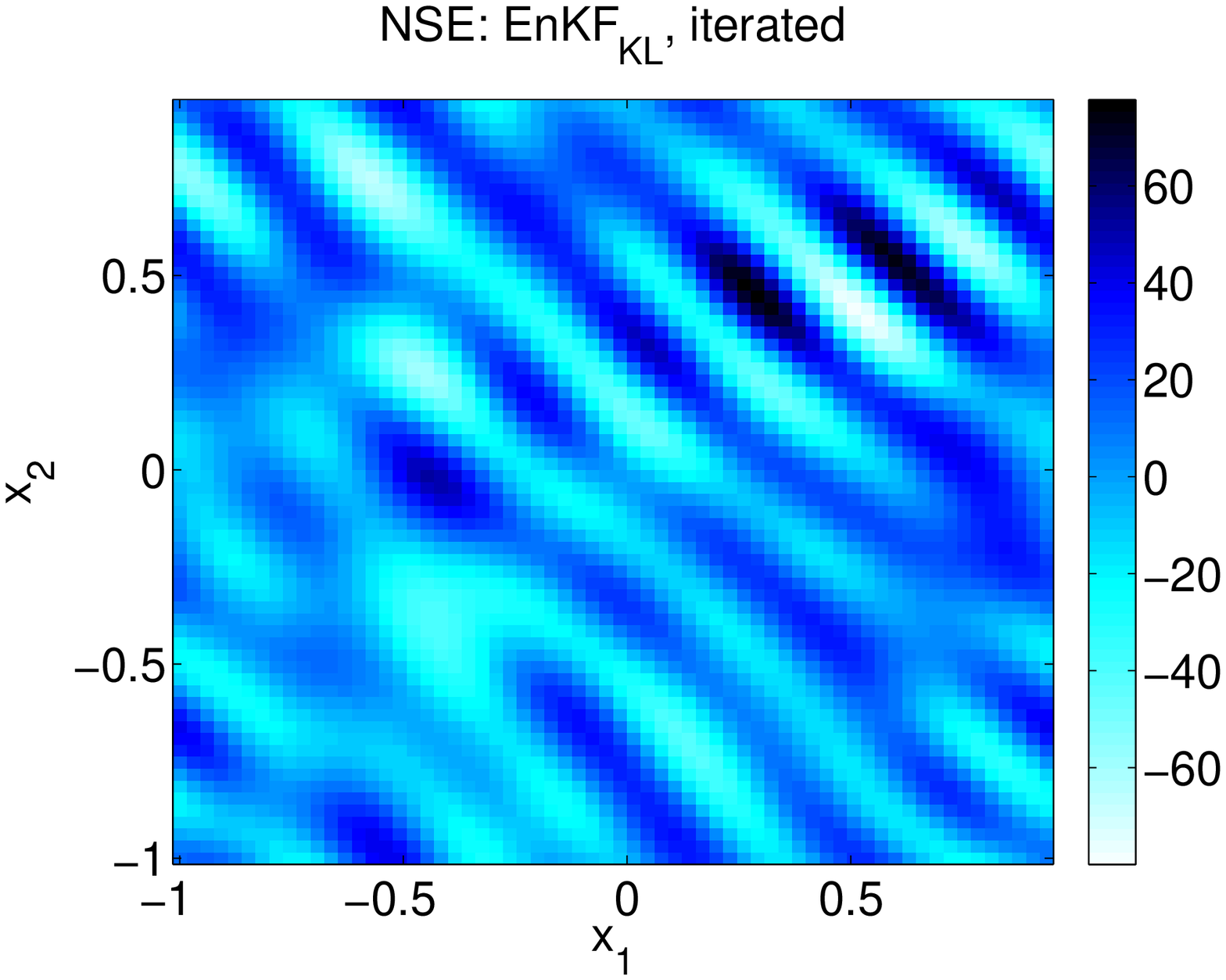} 
       \includegraphics[width=0.24\textwidth]{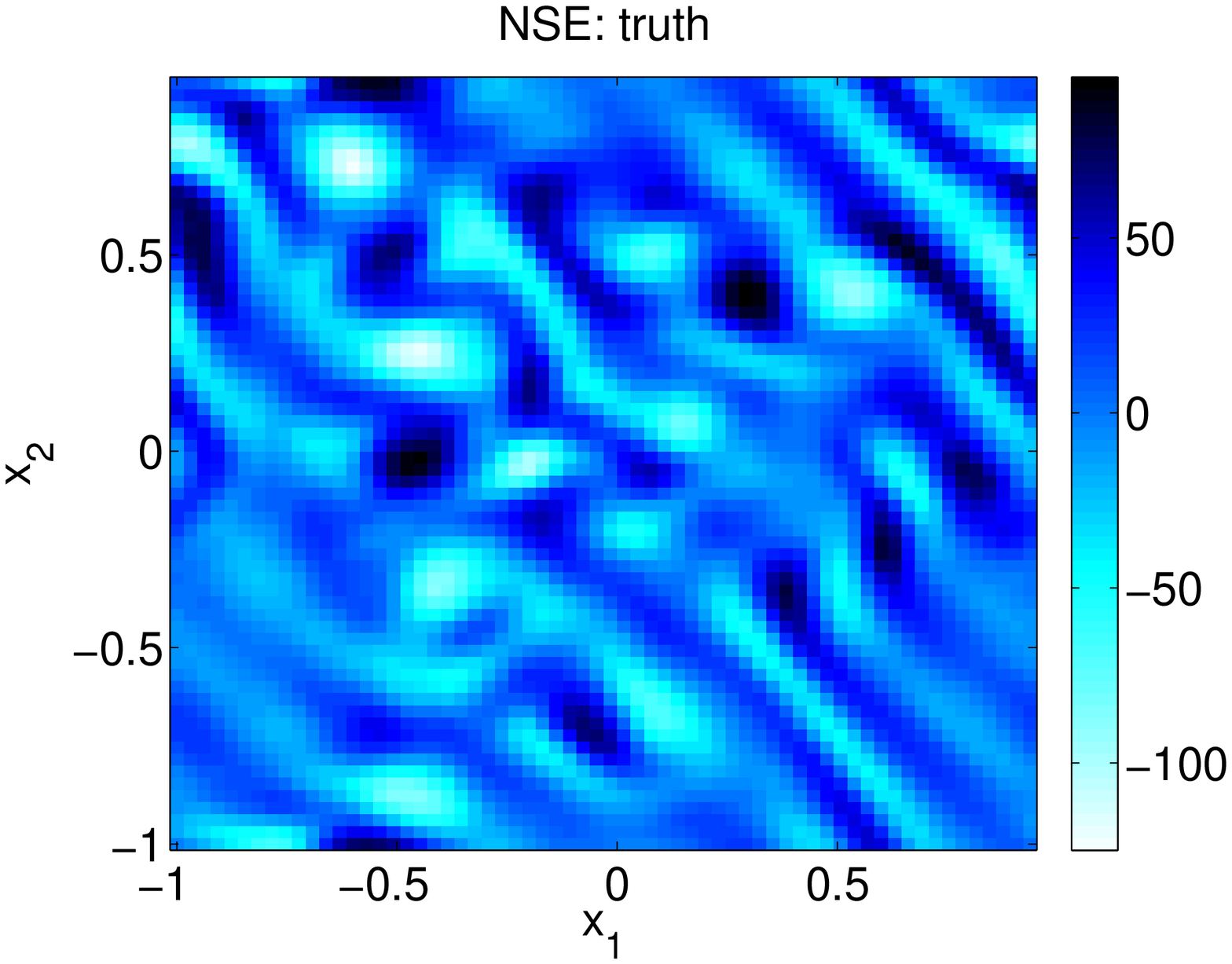}    
\caption{Vorticity of initial condition $\omega(0)$.
 Top left: BA$_{\rm R}$, average.
Top middle left: LS$_{\rm R}$, average.
Top middle right: first iteration of EnKF$_{\rm R}$, average.
Top right: tenth iteration of EnKF$_{\rm R}$, average.
Bottom left: BA$_{\rm KL}$. 
Bottom middle left: LS$_{\rm KL}$.  
Bottom middle right: EnKF$_{\rm KL}$.
Bottom right: the truth $u^\dagger$.}\label{Fign2}
\end{figure*}

\section{Conclusions}\label{Conclu}

We have illustrated the use of EnKF as a derivative-free
optimization tool for inverse problems, showing that
the method computes a nonlinear approximation in the
linear span of the initial ensemble. We have also demonstrated
comparable accuracy to least squares based methods
and shown that, furthermore, the accuracy is of
the same order of magnitude as the best approximation 
within the linear span of the initial ensemble.
Further study of the EnKF methodology for inverse problems,
and in particular its accuracy with respect to choice of
initial ensemble, would be of interest. Furthermore,
in this paper we have concentrated purely on the
accuracy of state estimation using EnKF. Study of
its accuracy in terms of uncertainty quantification will
yield further insight. Finally, although we have studied
a time-dependent example (the Navier-Stokes equation)
we did not use a methodology which exploited the
sequential acquisition of the data -- we concatenated
all the data in space and time. Exposing sequential structure in data can be useful in steady parameter estimation problems as shown in \cite{Calvetti}. In future work we will study ideas similar to those in our paper, but exploiting sequential structure.

\ack
This was supported by the ERC, EPSRC \& ONR.

\section*{References}
\bibliography{enkf_bib}
\bibliographystyle{plain}

\end{document}